\pgfplotsset{compat=1.3}% <-- moves axis labels near ticklabels (respects tick label widths)
\crefname{subsection}{subsection}{subsections}
\numberwithin{equation}{section}
\newcommand{\D}{\mathrm{D}}
\renewcommand{\d}{\,\mathrm{d}}
\newtheorem{theorem}{Theorem}[section]
\newtheorem{proposition}[theorem]{Proposition}
\newtheorem{lemma}[theorem]{Lemma}
\newtheorem{definition}[theorem]{Definition}
\newtheorem{corollary}[theorem]{Corollary}
\theoremstyle{remark}
\newtheorem{remark}[theorem]{Remark}
\begin{document}

\title[Guaranteed error control for Monge--Amp\`ere]
      {Stability and guaranteed error control of approximations
      to the Monge--Amp\`ere equation}

\author[D.~Gallistl \and N.~T. Tran]
{Dietmar Gallistl \and Ngoc Tien Tran}
\thanks{This project received funding from
	the European Union's Horizon 2020 research and innovation 
	programme (project DAFNE, grant agreement No.~891734).
	}
\address[D.~Gallistl, N.~T.~Tran]{%
	Institut f\"ur Mathematik,
	Friedrich-Schiller-Uni\-ver\-si\-t\"at Jena,
	07743 Jena, Germany}
\email{ $\{$dietmar.gallistl, ngoc.tien.tran$\}$@uni-jena.de}
\date{\today}

\keywords{Monge--Amp\`ere equation, regularization, a posteriori}

\subjclass[2010]{35J96, 65N12, 65N30, 65Y20}

\begin{abstract}
	This paper analyzes a regularization scheme of the Monge--Amp\`ere equation
	by uniformly elliptic Hamilton--Jacobi--Bellman equations.
	The main tools are stability estimates in the $L^\infty$ norm 
    from the theory of viscosity solutions which are independent of
	the regularization parameter $\varepsilon$.
	They allow for the uniform convergence of the solution $u_\varepsilon$ 
	to the regularized problem towards the Alexandrov solution $u$ to the 
	Monge--Amp\`ere equation for any nonnegative $L^n$ right-hand side
	and continuous Dirichlet data.
	The main application are guaranteed a~posteriori error bounds in the $L^\infty$ norm
	for continuously differentiable finite element approximations 
	of $u$ or $u_\varepsilon$.
\end{abstract}

\maketitle

\section{Introduction}

\subsection*{Overview}
Let $\Omega\subset\mathbb R^n$, $n\geq 2$, be a bounded and convex domain.
Given a nonnegative function $0 \leq f \in L^n(\Omega)$ and continuous 
Dirichlet data $g \in C(\partial\Omega)$, the Monge--Amp\`ere equation
seeks the unique (convex) Alexandrov solution $u \in C(\overline{\Omega})$ to
\begin{align}\label{def:Monge-Ampere}
	\det \D^2 u = (f/n)^n \text{ in } \Omega \quad\text{and}\quad u = g \text{ on } \partial \Omega.
\end{align}
If the Dirichlet data $g \neq 0$ is non-homogenous, 
then we additionally assume that $\Omega$ is strictly convex.
The re-scaling $\tilde f \coloneqq (f/n)^n$ of the right-hand side is not essential,
but turns out convenient for purposes of notation.
By the Alexandrov solution $u$ to \eqref{def:Monge-Ampere} we mean 
a convex function $u \in C(\Omega)$ with $u=g$ on $\partial\Omega$
and
$$
   \mathcal{L}^n(\partial v(\omega)) = \int_\omega \tilde f\d x
   \quad\text{for any Borel subset }\omega \subset\Omega.
$$
The left-hand side denotes the Monge--Amp\`ere measure of $\omega$,
i.e., the $n$-dimensional Lebesgue measure of
all vectors in the subdifferential 
$\partial v(\omega) \coloneqq \cup_{x \in \omega} \partial v(x)$
where $\partial v(x)$ is the usual subdifferential of $v$ in a point $x$.
We remark that this solution concept admits more general right-hand
sides, which are, however, not disregarded in this work.
For further details, we refer to the monographs \cite{Gutierrez2016,Figalli2017}.
It is known \cite{Alexandrov1958} that the Alexandrov solution to
\eqref{def:Monge-Ampere} exists and is unique.
In addition, it was shown \cite{Caffarelli1990} that
if $f \in C^{0,\alpha}(\Omega)$, $0 < \lambda \leq f \leq \Lambda$, 
and $g \in C^{1,\beta}(\partial\Omega)$ with positive constants $0 < \alpha,\beta < 1$ and $0 < \lambda \leq \Lambda$, 
then $u \in C(\overline{\Omega}) \cap C^{2,\alpha}_{\text{loc}}(\Omega)$.

It is known \cite{Krylov1987,FengJensen2017} that \eqref{def:Monge-Ampere} 
can be equivalently formulated as a Hamilton--Jacobi--Bellman (HJB)
equation, a property that turned out useful for the numerical solution
of \eqref{def:Monge-Ampere} \cite{FengJensen2017,gt2021};
one of the reasons being that the 
latter is elliptic on the whole space of symmetric matrices 
$\mathbb{S} \subset \mathbb{R}^{n \times n}$ and, therefore,
the convexity constraint is automatically
enforced by the HJB formulation.
For nonnegative continuous right-hand sides $0 \leq f \in C(\Omega)$, the Monge--Amp\`ere equation \eqref{def:Monge-Ampere} is equivalent to
\begin{align*}
	F_0(f;x,\D^2 u) = 0 \text{ in } \Omega \quad\text{and}\quad u = g \text{ on } \partial \Omega
\end{align*}
with $F_0(f;x,M) \coloneqq \sup_{A \in \mathbb{S}(0)}(-A:M + f\sqrt[n]{\det A})$ for any $x \in \Omega$ and $M \in \mathbb{R}^{n\times n}$. Here, $\mathbb{S}(0) \coloneqq \{A \in \mathbb{S}: A \geq 0 \text{ and } \mathrm{tr}\,A = 1\}$ denotes the set of positive semidefinite symmetric matrices $A$ with unit trace $\mathrm{tr}\,A = 1$.
Since $F_0$ is only degenerate elliptic, the regularization scheme proposed in \cite{gt2021} replaces $\mathbb{S}(0)$ by a compact subset $\mathbb{S}(\varepsilon) \coloneqq \{A \in \mathbb{S}(0): A \geq \varepsilon\} \subset \mathbb{S}(0)$ of matrices with eigenvalues bounded from below by the regularization parameter $0 < \varepsilon \leq 1/n$.
The solution $u_\varepsilon$ to the regularized PDE solves
\begin{align}\label{def:HJB}
	F_\varepsilon(f;x,\D^2 u_\varepsilon) = 0 \text{ in } \Omega \quad\text{and}\quad u_\varepsilon = g \text{ on } \partial \Omega
\end{align}
where, for any $x \in \Omega$ and $M \in \mathbb{R}^{n \times n}$, 
the function $F_\varepsilon$ is defined as
\begin{align}\label{def:F-eps}
	F_\varepsilon(f;x,M) \coloneqq \sup\nolimits_{A \in \mathbb{S}(\varepsilon)} (-A:M + f\sqrt[n]{\det A}).
\end{align}
In two space dimensions $n = 2$, uniformly elliptic HJB equations satisfy the Cordes condition 
\cite{MaugeriPalagachevSoftova2000}
and this allows for a variational setting for \eqref{def:HJB} with a unique strong solution $u_\varepsilon \in H^2(\Omega)$ in the sense that $F_\varepsilon(f;x,\D^2 u_\varepsilon) = 0$ holds a.e.~in $\Omega$ \cite{SmearsSueli2013,SmearsSueli2014}.
The paper \cite{gt2021} establishes uniform convergence of $u_\varepsilon$ towards the generalized solution $u$ to the Monge--Amp\`ere equation \eqref{def:Monge-Ampere} as $\varepsilon \searrow 0$ under the assumption $g \in H^2(\Omega) \cap C^{1,\alpha}(\overline{\Omega})$ and that $0 \leq f \in L^2(\Omega)$ can be approximated from below by a pointwise monotone sequence of positive continuous functions.

\subsection*{Contributions of this paper}
The variational setting of \eqref{def:HJB} in two space dimensions leads to $H^2$ stability estimates that deteriorate with $\varepsilon^{-1} \to \infty$ as the regularization parameter $\varepsilon \to 0$ vanishes.
This can be explained by the regularity of Alexandrov solutions to the Monge--Amp\`ere equation \eqref{def:Monge-Ampere} as they are, in general, not in $H^2(\Omega)$ without additional assumptions on the domain $\Omega$ and the data $f,g$.
Consequently, error estimates in the $H^2$ norm may not be of interest, and the focus is on error estimates in the $L^\infty$ norm.

The analysis departs from the following $L^\infty$ stability estimate that arises from the Alexandrov maximum principle.
If $v_1,v_2 \in C(\overline{\Omega})$ are viscosity solutions to $F_\varepsilon(f_j; x, \D^2 v_j) = 0$ in $\Omega$ with $0 \leq \varepsilon \leq 1/n$ and $f_1,f_2 \in C(\overline{\Omega})$, then
\begin{align}\label{ineq:stability-intro}
	\|v_1-v_2\|_{L^\infty(\Omega)} \leq \|v_1-v_2\|_{L^\infty(\partial \Omega)} + C(n,\mathrm{diam}(\Omega))\|f_1-f_2\|_{L^n(\Omega)}.
\end{align}
The constant $C(n,\mathrm{diam}(\Omega))$ exclusively depends
on the dimension $n$ and the diameter $\mathrm{diam}(\Omega)$ of $\Omega$, but 
not on the ellipticity constant of \eqref{def:HJB} or on the regularization parameter $\varepsilon$.
Consequently, this allows for 
control of the $L^\infty$ error even as $\varepsilon \to 0$.
By density of $C(\overline{\Omega})$ in $L^n(\Omega)$, 
the $L^\infty$ stability estimate \eqref{ineq:stability-intro} can be extended to
solutions $v_1,v_2 \in C(\overline{\Omega})$ for $0 < \varepsilon \leq 1/n$ 
(or $\varepsilon = 0$ if $f_1,f_2 \geq 0$) with
the following two applications.
First, this paper establishes, in extension to \cite{gt2021}, uniform convergence of 
(generalized) 
viscosity solutions $u_\varepsilon$ of the regularized PDE \eqref{def:HJB} to the Alexandrov solution 
$u \in C(\overline{\Omega})$ of the Monge--Amp\`ere equation \eqref{def:HJB} 
under the (essentially) minimal assumptions $0 \leq f \in L^n(\Omega)$ and $g \in C(\partial\Omega)$ 
on the data.
Second, \eqref{ineq:stability-intro} provides guaranteed error control in the $L^\infty$ norm (even for inexact solve) for $H^2$ conforming FEM.

\subsection*{Outline}
The principal tool we use for establishing our results 
is
the celebrated Alexandrov maximum principle.
It provides an upper bound for the $L^\infty$ norm of any convex function 
in dependence of its Monge--Amp\`ere measure.
\begin{lemma}[Alexandrov maximum principle]\label{lem:Alexandrov-maximum}
	There exists a constant $c_n$ solely depending on the dimension $n$ such that 
	any convex function $v \in C(\overline{\Omega})$ with homogenous boundary data $v|_{\partial \Omega} = 0$ over an open bounded convex
	domain $\Omega$ satisfies
	\begin{align}
		|v(x)|^n \leq c_n^n\mathrm{dist}(x,\partial \Omega) \mathrm{diam}(\Omega)^{n-1}\mathcal L^n(\partial v(\Omega)) \quad\text{for any } x \in \Omega.
	\end{align}
\end{lemma}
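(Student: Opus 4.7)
My plan is to use the classical cone comparison argument behind the Alexandrov estimate. Fix $x_0 \in \Omega$; convexity together with $v|_{\partial\Omega} = 0$ forces $v \leq 0$ in $\Omega$, and I may assume $v(x_0) < 0$. Write $d := \mathrm{dist}(x_0,\partial\Omega)$ and $D := \mathrm{diam}(\Omega)$. I will introduce the convex cone $w$ over $\Omega$ with apex $(x_0, v(x_0))$ and base $\partial\Omega \times \{0\}$: for $y \in \overline\Omega$ written as $y = (1-t)x_0 + tz$ with $z \in \partial\Omega$ and $t \in [0,1]$, set $w(y) := (1-t)\,v(x_0)$. The strategy is to compare $v$ with $w$ and then compute (a lower bound for) the Monge--Amp\`ere measure of $w$ at its apex.

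First I would verify $v \leq w$ in $\Omega$: on any segment from $x_0$ to a boundary point $z$, $w$ is the affine interpolation between $v(x_0)$ and $0 = v(z)$, and convexity of $v$ yields the same upper bound for $v$. Next, I would show the inclusion $\partial w(x_0) \subset \partial v(\Omega)$. For $p \in \partial w(x_0)$, the supporting affine map $L_p(y) := v(x_0) + p\cdot(y-x_0)$ satisfies $L_p \leq w$ on $\overline\Omega$, and hence $L_p \leq 0 = v$ on $\partial\Omega$. Thus $v - L_p \geq 0$ on $\partial\Omega$ and vanishes at $x_0$, so its minimum over $\overline\Omega$ is attained at some $x^\star \in \overline{\Omega}$; a short calculation at an interior minimiser gives $p \in \partial v(x^\star)$, with $x^\star = x_0$ serving as a valid choice if the minimum happens to live on $\partial\Omega$. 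In particular $\mathcal L^n(\partial w(x_0)) \leq \mathcal L^n(\partial v(\Omega))$.

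The last step is an explicit lower bound for $\mathcal L^n(\partial w(x_0))$. Because $w$ is piecewise linear along rays from $x_0$, $p \in \partial w(x_0)$ if and only if $p \cdot (y - x_0) \leq |v(x_0)|$ for every $y \in \overline\Omega$, so $\partial w(x_0) = |v(x_0)|\,(\Omega - x_0)^\circ$, the scaled polar of $\Omega - x_0$. Let $\nu$ be a unit vector with $x_0 + d\nu \in \partial\Omega$; convexity of $\Omega$ gives a supporting half-space at this nearest point and hence the inclusion $\Omega - x_0 \subset \{z : \nu\cdot z \leq d\} \cap B_D(0)$. Since polars reverse inclusions, $(\Omega - x_0)^\circ$ contains the convex hull of $B_{1/D}(0)$ and the point $\nu/d$, which in turn contains the solid cone over the equatorial $(n-1)$-disk of $B_{1/D}(0)$ with apex $\nu/d$; the Lebesgue measure of this cone is of order $1/(d\,D^{n-1})$ with a purely dimensional constant. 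Chaining the three estimates produces the inequality with an explicit $c_n$.

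The step I expect to need the most care is the subdifferential inclusion: one must check that the minimiser of $v - L_p$ can actually be used to produce a subgradient of $v$, handling the borderline case in which the minimum is attained only on $\partial\Omega$. The polar-volume estimate is elementary but benefits from passing through the explicit "ice-cream-cone" convex hull above rather than trying to compute the polar of $\Omega - x_0$ directly.
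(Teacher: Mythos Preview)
Your argument is correct and is precisely the classical cone comparison proof that the paper invokes by citing \cite[Theorem~2.8]{Figalli2017}; the paper does not spell out a proof of its own beyond pointing to that reference and noting that the constant $c_n$ comes from the volume of a cone $\mathcal C\subset\partial v(\Omega)$, which is exactly the cone you construct. Your handling of the borderline case in Step~2 and the polar/``ice-cream cone'' volume estimate in Step~3 are both sound, and your computation yields $c_2=1$ in agreement with the paper.
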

\begin{proof}
	This is \cite[Theorem 2.8]{Figalli2017} and the constant $c_n \coloneqq (2(2\pi)^{n/2-1}/((n-1)!!n)$ arises therein from the $n$-dimensional volume formula for a cone $\mathcal{C} \subset \partial v(\Omega)$. If $n = 2$, then $c_2 = 1$.
\end{proof}

The remaining parts of this paper are organized as follows.
\Cref{sec:HJB} establishes $L^\infty$ stability estimates for viscosity solutions to the HJB equation \eqref{def:HJB} for all parameters $0 \leq \varepsilon \leq 1/n$ in any space dimension.
\Cref{sec:convergence} provides a proof of convergence of the regularization scheme.
A~posteriori error estimates for the discretization error in the $L^\infty$ norm for 
$H^2$-conforming FEM are presented in \Cref{sec:FEM}.
The three numerical experiments in \Cref{sec:num} conclude this paper.

% \subsection{Notation}
\medskip
Standard notation for function spaces applies throughout this paper. Let $C^k(\Omega)$ for $k \in \mathbb{N}$ denote the space of scalar-valued $k$-times continuously differentiable functions. Given a positive parameter $0 < \alpha \leq 1$, the H\"older space $C^{k,\alpha}(\Omega)$ is the subspace of $C^k(\overline{\Omega})$ such that all partial derivates of order $k$ are H\"older continuous with exponent $\alpha$.
For any set $\omega \subset \mathbb{R}^n$, $\chi_\omega$ denotes the indicator function associated with $\omega$.
For $A,B \in \mathbb{R}^{n \times n}$, the Euclidean scalar product $A:B \coloneqq \sum_{j,k = 1}^{n} A_{jk} B_{jk}$ induces the Frobenius norm $|A| \coloneqq \sqrt{A:A}$ in $\mathbb{R}^{n \times n}$. 
The notation $|\cdot|$ also denotes the absolute value of a scalar
or the length of a vector.
The relation $A \leq B$ of symmetric matrices $A,B \in \mathbb{S}$ holds 
whenever $B - A$ is positive semidefinite.

\section{Stability estimate}
\label{sec:HJB}
We first recall the concept of viscosity solutions to the HJB equation \eqref{def:HJB}.
\begin{definition}[viscosity solution]
	Let $f \in C(\Omega)$ and $0 \leq \varepsilon \leq 1/n$ be given.
	A function $v \in C(\overline{\Omega})$ is a viscosity subsolution 
	(resp.\ supersolution) to $F_\varepsilon(f;x,\D^2 v) = 0$ if, 
	for all $x_0 \in \Omega$ and $\varphi \in C^2(\Omega)$ such that 
	$v - \varphi$ has a local maximum (resp.\ minimum) at $x_0$, 
	$F_\varepsilon(f;x,\D^2 \varphi) \leq 0$ (resp.\ $F_\varepsilon(f;x,\D^2 \varphi) \geq 0$).
	If $v$ is viscosity sub- and supersolution, then $v$ is called 
	viscosity solution to $F_\varepsilon(f;x,\D^2 v) = 0$.
\end{definition}
The following result provides the first tool in the analysis of this section.
\begin{lemma}[classical comparison principle]\label{lem:comparison-principle}
	Given $0 \leq \varepsilon \leq 1/n$ and a continuous right-hand side $f \in C(\Omega)$,
	where we assume $f\geq0$ if $\varepsilon=0$,
	let $v^* \in C(\overline{\Omega})$ resp.\
	$v_* \in C(\overline{\Omega})$ be a super- resp.~subsolution to the PDE
	\begin{align}\label{def:HJB-no-Dirichlet-data}
		F_\varepsilon(f;x,\D^2 v) = 0 \text{ in } \Omega.
	\end{align}
	If $v_* \leq v^*$ on $\partial \Omega$, then $v_* \leq v^*$ in $\overline{\Omega}$.
\end{lemma}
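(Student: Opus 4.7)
The plan is to argue by contradiction using the Crandall--Ishii doubling of variables technique, the standard tool for viscosity solutions of fully nonlinear second-order PDEs. Suppose that $M \coloneqq \max_{\overline{\Omega}}(v_* - v^*) > 0$; since $v_* \leq v^*$ on $\partial\Omega$, this maximum is attained at some interior point $\hat x \in \Omega$.

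For $\alpha > 0$, introduce the penalised function $\Phi_\alpha(x,y) \coloneqq v_*(x) - v^*(y) - (\alpha/2)|x-y|^2$ on $\overline{\Omega}\times\overline{\Omega}$ and pick a maximiser $(x_\alpha, y_\alpha)$. Standard compactness estimates yield $x_\alpha, y_\alpha \to \hat x$ and $\alpha|x_\alpha - y_\alpha|^2 \to 0$ as $\alpha \to \infty$, so both points lie in $\Omega$ for $\alpha$ large. The Crandall--Ishii theorem on sums then produces matrices $X_\alpha, Y_\alpha \in \mathbb{S}$ with $X_\alpha \leq Y_\alpha$, $(\alpha(x_\alpha-y_\alpha), X_\alpha) \in \overline{J}^{2,+}v_*(x_\alpha)$, and $(\alpha(x_\alpha-y_\alpha), Y_\alpha) \in \overline{J}^{2,-}v^*(y_\alpha)$; the sub- and supersolution properties then give
\begin{equation*}
F_\varepsilon(f;x_\alpha, X_\alpha) \leq 0 \leq F_\varepsilon(f;y_\alpha, Y_\alpha).
\end{equation*}

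Two structural properties of $F_\varepsilon$ drive the remainder. First, every $A \in \mathbb{S}(\varepsilon)$ is positive semidefinite, so $F_\varepsilon(f;x,\cdot)$ is non-increasing in the Loewner order and $X_\alpha \leq Y_\alpha$ implies $F_\varepsilon(f;x_\alpha, X_\alpha) \geq F_\varepsilon(f;x_\alpha, Y_\alpha)$. Second, AM--GM yields $\sqrt[n]{\det A} \leq \mathrm{tr}(A)/n = 1/n$ on $\mathbb{S}(\varepsilon)$, whence the spatial modulus $|F_\varepsilon(f;x,M) - F_\varepsilon(f;y,M)| \leq |f(x) - f(y)|/n$. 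Chaining these gives $0 \geq -|f(x_\alpha) - f(y_\alpha)|/n \to 0$ by continuity of $f$, which is consistent but not yet strict.

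The main obstacle is precisely this lack of strict monotonicity (properness) of $F_\varepsilon$ in the matrix variable. The standard remedy is to perturb $v_*$ to $v_* - \eta\psi$ with small $\eta > 0$ and a smooth strictly convex $\psi$, so that the sub-solution inequality for $v_* - \eta\psi$ becomes strict; for $\varepsilon > 0$ the sharper bound $F_\varepsilon(f;x,M) - F_\varepsilon(f;x,N) \geq \varepsilon\,\mathrm{tr}(N-M)$ whenever $M \leq N$ (from $A \geq \varepsilon I$) provides the quantitative gap $\eta\varepsilon\,\mathrm{tr}(\D^2\psi) > 0$ needed to close the argument, and letting $\eta \to 0$ concludes. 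For the degenerate case $\varepsilon = 0$ with $f \geq 0$, testing $F_0$ with rank-one matrices $A = vv^\top \in \mathbb{S}(0)$ shows that any viscosity subsolution of $F_0 = 0$ is convex; one may then invoke the classical Monge--Amp\`ere comparison for Alexandrov solutions together with \Cref{lem:Alexandrov-maximum}.
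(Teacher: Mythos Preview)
Your overall framework---doubling of variables plus a strict-subsolution perturbation---is exactly the route the paper defers to (it simply cites \cite[Section~3]{CrandallIshiiLions1992} and \cite[Lemma~3.6]{FengJensen2017} without spelling out details). The structural observations you isolate (degenerate ellipticity of $F_\varepsilon$, the uniform spatial modulus $|F_\varepsilon(f;x,M)-F_\varepsilon(f;y,M)|\le |f(x)-f(y)|/n$, and the uniform-ellipticity gap $F_\varepsilon(f;x,M)-F_\varepsilon(f;x,N)\ge \varepsilon\,\mathrm{tr}(N-M)$ for $M\le N$) are all correct and are precisely what drives the argument.

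There is, however, a sign error in your perturbation. Subtracting a strictly convex $\psi$ from $v_*$ does \emph{not} yield a strict subsolution: if $\varphi$ touches $v_*-\eta\psi$ from above, then $\varphi+\eta\psi$ touches $v_*$ from above, giving $F_\varepsilon(f;x,\D^2\varphi+\eta\D^2\psi)\le 0$; since $\D^2\varphi\le \D^2\varphi+\eta\D^2\psi$ and $F_\varepsilon$ is non-increasing in the Hessian, you only obtain $F_\varepsilon(f;x,\D^2\varphi)\ge F_\varepsilon(f;x,\D^2\varphi+\eta\D^2\psi)$, the wrong direction. The fix is to \emph{add} the convex perturbation: with $\tilde v_*\coloneqq v_*+\eta\psi$, a test function $\varphi$ from above yields $F_\varepsilon(f;x,\D^2\varphi-\eta\D^2\psi)\le 0$, and then your uniform-ellipticity bound gives $F_\varepsilon(f;x,\D^2\varphi)\le -\varepsilon\eta\,\mathrm{tr}(\D^2\psi)<0$. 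For $\eta$ small the maximum of $\tilde v_*-v^*$ is still interior, and the doubling chain now closes with the strict gap $0\le -\varepsilon\eta\,\mathrm{tr}(\D^2\psi)+\omega_f(|x_\alpha-y_\alpha|)$, a contradiction as $\alpha\to\infty$.

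For $\varepsilon=0$ your sketch is thinner. The rank-one test correctly shows that viscosity subsolutions of $F_0=0$ are convex, but ``invoke the classical Monge--Amp\`ere comparison together with \Cref{lem:Alexandrov-maximum}'' is not quite enough: the Alexandrov maximum principle is an $L^\infty$ bound, not a comparison statement, and you still need to connect viscosity supersolutions of $F_0=0$ to Alexandrov supersolutions of $\det\D^2 u=(f/n)^n$ before the Monge--Amp\`ere comparison (e.g.\ \cite[Theorem~2.10]{Figalli2017}) applies. This linkage is precisely what \cite[Lemma~3.6]{FengJensen2017} supplies, and you should cite or reproduce that step rather than leave it implicit.
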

\begin{proof}
	The proof applies the arguments from \cite[Section 3]{CrandallIshiiLions1992} to the PDE \eqref{def:HJB-no-Dirichlet-data} and can follow \cite[Lemma 3.6]{FengJensen2017} with straightforward modifications; further details are therefore omitted.
\end{proof}
An extended version of \Cref{lem:comparison-principle} below is the following.
\begin{lemma}[comparison principle]\label{lem:extended-comparison-principle}
	Given any $0 \leq \varepsilon_* \leq \varepsilon^* \leq 1/n$ and $f_*,f^* \in C(\Omega)$ with $f_* \leq f^*$ in $\Omega$,
	where we assume $f_*\geq0$ if $\varepsilon_*=0$,
	let $v_*, v^* \in C(\overline{\Omega})$ be viscosity solutions to
	\begin{align*}
		F_{\varepsilon^*}(f_*; x, \D^2 v^*) = 0 \text{ in } \Omega \quad\text{and}\quad
		F_{\varepsilon_*}(f^*; x, \D^2 v_*) = 0 \text{ in } \Omega.
	\end{align*}
	If $v_* \leq v^*$ on $\partial \Omega$, then $v_* \leq v^*$ in $\overline{\Omega}$.
\end{lemma}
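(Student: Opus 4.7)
The plan is to reduce the statement to the classical comparison principle in Lemma~\ref{lem:comparison-principle} by showing that $v^*$ is also a viscosity supersolution of the PDE associated with the ``outer'' parameters $\varepsilon_*$ and $f^*$. Once this is established, the classical principle, applied with parameter $\varepsilon_*$ and right-hand side $f^*$, settles the ordering $v_* \leq v^*$.

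The first step is the pointwise monotonicity of the Bellman operator in its two data slots: I would show
\[
  F_{\varepsilon^*}(f_*; x, M) \leq F_{\varepsilon_*}(f^*; x, M)
  \quad\text{for every } x \in \Omega \text{ and } M \in \mathbb{R}^{n \times n}.
\]
Two ingredients deliver this at once. Since $\varepsilon_* \leq \varepsilon^*$, the admissible set shrinks with $\varepsilon$, that is, $\mathbb{S}(\varepsilon^*) \subset \mathbb{S}(\varepsilon_*)$. Moreover, for any $A \in \mathbb{S}(\varepsilon^*)$ the matrix $A$ is positive semidefinite, so $\sqrt[n]{\det A} \geq 0$; combined with $f_* \leq f^*$ this gives $f_*(x)\sqrt[n]{\det A} \leq f^*(x)\sqrt[n]{\det A}$. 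Taking the supremum of $-A:M + f_*(x)\sqrt[n]{\det A}$ over the smaller set $\mathbb{S}(\varepsilon^*)$ is thus majorized by the supremum of $-A:M + f^*(x)\sqrt[n]{\det A}$ over the larger set $\mathbb{S}(\varepsilon_*)$, which yields the claim.

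The second step uses this monotonicity to promote $v^*$ to a supersolution of the new PDE. If $\varphi \in C^2(\Omega)$ is a test function such that $v^* - \varphi$ attains a local minimum at some $x_0 \in \Omega$, then by hypothesis $F_{\varepsilon^*}(f_*; x_0, \D^2 \varphi(x_0)) \geq 0$, and the monotonicity inequality immediately upgrades this to $F_{\varepsilon_*}(f^*; x_0, \D^2 \varphi(x_0)) \geq 0$. Hence $v^*$ is a viscosity supersolution of $F_{\varepsilon_*}(f^*; x, \D^2 v) = 0$ in $\Omega$, while $v_* \in C(\overline{\Omega})$ is, in particular, a viscosity subsolution of the same equation, and the boundary inequality $v_* \leq v^*$ on $\partial\Omega$ is given.

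Finally, I would invoke Lemma~\ref{lem:comparison-principle} with $\varepsilon=\varepsilon_*$ and right-hand side $f^*$. The sign hypothesis of the classical principle is transferred: when $\varepsilon_* = 0$ the assumption forces $f_* \geq 0$, and since $f^* \geq f_*$ we also have $f^* \geq 0$. The classical comparison principle then delivers $v_* \leq v^*$ throughout $\overline{\Omega}$. The only conceptual point of the argument is recognizing the joint monotonicity of $F_\varepsilon(f;\cdot,\cdot)$ in the pair $(\varepsilon,f)$ on the admissible parameter range; there is no genuine technical obstacle beyond that observation.
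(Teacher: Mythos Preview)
Your proposal is correct and follows essentially the same route as the paper: you establish the pointwise monotonicity $F_{\varepsilon^*}(f_*;x,M)\le F_{\varepsilon_*}(f^*;x,M)$ from $\mathbb{S}(\varepsilon^*)\subset\mathbb{S}(\varepsilon_*)$ and $f_*\le f^*$, use it to show that $v^*$ is a viscosity supersolution of $F_{\varepsilon_*}(f^*;x,\D^2 v)=0$, and then invoke Lemma~\ref{lem:comparison-principle}. Your explicit verification that the sign condition $f^*\ge 0$ holds when $\varepsilon_*=0$ is a welcome addition that the paper leaves implicit.
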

\begin{proof}
	Given any test function $\varphi \in C^2(\Omega)$ and $x \in \Omega$ such that
	$v^* - \varphi$ has a local minimum at $x$,
	then $F_{\varepsilon^*}(f_*; x, \D^2 v^*) = 0$ in the sense of viscosity solutions implies
	$0 \leq F_{\varepsilon^*}(f_*; x, \D^2 \varphi(x))$.
	This, $f_* \leq f^*$ in $\Omega$, and $\mathbb{S}(\varepsilon^*) \subset \mathbb{S}(\varepsilon_*)$ show
	\begin{align}
		0 \leq F_{\varepsilon^*}(f_*; x, \D^2 \varphi(x)) \leq F_{\varepsilon_*}(f^*; x, \D^2 \varphi(x)),
	\end{align}
	whence $v^*$ is viscosity supersolution to the PDE $F_{\varepsilon_*}(f^*; x, \D^2 v_*) = 0$.
	Therefore, the comparison principle from \Cref{lem:comparison-principle} with $v_* \leq v^*$ on $\partial \Omega$ concludes $v_* \leq v^*$ in $\overline{\Omega}$.
\end{proof}
The comparison principle from \Cref{lem:comparison-principle} allows for the existence and uniqueness of viscosity solutions \eqref{def:HJB} by Perron's method.
\begin{proposition}[properties of HJB equation]
	\label{prop:properties-HJB}
	Given any $0 \leq \varepsilon \leq 1/n$, $f \in C(\Omega) \cap L^n(\Omega)$, 
	where we assume $f\geq0$ if $\varepsilon=0$,
	and $g \in C(\partial\Omega)$, there exists a unique viscosity solution $u \in C(\overline{\Omega})$ to the HJB equation \eqref{def:HJB}.
	It satisfies (a)--(b):
	\begin{enumerate}[wide]
		\item[(a)] (viscosity = Alexandrov) If $\varepsilon = 0$ and $f \geq 0$ is nonnegative, then the viscosity solution to the HJB equation \eqref{def:HJB} and the Alexandrov solution to the Monge--Amp\`ere equation \eqref{def:Monge-Ampere} coincide.
		\item[(b)] (interior regularity for HJB) If $\varepsilon > 0$ and $f \in C^{0,\alpha}(\Omega)$ with $0 < \alpha < 1$, then $u \in C(\overline{\Omega}) \cap C^{2,\kappa}_{\text{loc}}(\Omega)$ with
		a constant $0 < \kappa < 1$ that solely depends on $\alpha$ and $\varepsilon$.
		\item[(c)] (interior regularity for Monge--Amp\`ere) If $\varepsilon = 0$, $f \in C^{0,\alpha}(\Omega)$ with $0 < \alpha < 1$, $f > 0$ in $\overline{\Omega}$, and $g \in C^{1,\beta}(\partial \Omega)$ with $\beta > 1 - 2/n$, then $u \in C(\overline{\Omega}) \cap C^{2,\alpha}_{\text{loc}}(\Omega)$.
	\end{enumerate}
\end{proposition}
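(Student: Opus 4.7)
The plan is to handle existence/uniqueness and the three items (a)--(c) separately, relying on the comparison principle from \Cref{lem:comparison-principle} and on standard theory for each regime.

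Uniqueness is immediate from \Cref{lem:comparison-principle} applied with $v_*=v^*$ and identical data. For existence I would run Perron's method: construct a viscosity subsolution $\underline{u}$ and a viscosity supersolution $\overline{u}$ in $C(\overline{\Omega})$ that both attain the boundary values $g$, and then form the pointwise supremum of all viscosity subsolutions sandwiched between $\underline{u}$ and $\overline{u}$. For $\varepsilon=0$ the classical Alexandrov solution of \eqref{def:Monge-Ampere} itself serves as a subsolution, and a concave function built from a supporting hyperplane of $\Omega$ provides a supersolution. For $\varepsilon>0$ the operator $F_\varepsilon$ is uniformly elliptic, so quadratic barriers at each $x_0\in\partial\Omega$ can be constructed using the (strict) convexity of $\Omega$ to pinch the Perron envelope to $g(x_0)$.

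For part (a) I would use the Krylov-type representation of $F_0$: for a positive semidefinite matrix $M\in\mathbb{S}$ one has
\begin{equation*}
  \sup_{A\in\mathbb{S}(0)}\bigl(-A:M + f\sqrt[n]{\det A}\bigr)
  = -n\sqrt[n]{\det M}+f,
\end{equation*}
attained at a cofactor-type matrix $A$, while the supremum equals $+\infty$ as soon as $M$ possesses a negative eigenvalue. Consequently a viscosity solution of $F_0(f;x,\D^2 u)=0$ is forced to be viscosity-convex and to satisfy $\det\D^2 u = (f/n)^n$ in the viscosity sense. Since for $f\geq0$ the viscosity and Alexandrov notions of solution of the Monge--Amp\`ere equation coincide on continuous convex functions (see \cite{Figalli2017,Gutierrez2016}), $u$ is the Alexandrov solution.

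Parts (b) and (c) reduce to the fully nonlinear regularity literature. For $\varepsilon>0$ every $A\in\mathbb{S}(\varepsilon)$ has eigenvalues in $[\varepsilon,1-(n-1)\varepsilon]$, so $F_\varepsilon$ is a concave, uniformly elliptic Bellman operator whose $x$-dependence inherits the H\"older regularity of $f$; Caffarelli's interior $C^{2,\kappa}$ estimate then yields (b), with an exponent $\kappa$ depending on $\alpha$ and on the ellipticity constants, hence on $\varepsilon$. For (c), the identification of $u$ with the Alexandrov solution from (a), the lower bound on $f$, and the smoothness of $g$ place us within the scope of Caffarelli's interior regularity theorem \cite{Caffarelli1990}. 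The main obstacle, as is typical under the minimal assumption $g\in C(\partial\Omega)$, is the continuity of the Perron envelope up to $\partial\Omega$: producing matching barriers at every boundary point, uniformly in $\varepsilon\in[0,1/n]$, is precisely the step that consumes the strict convexity of $\Omega$ for non-homogeneous $g$.
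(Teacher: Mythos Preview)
Your overall strategy—uniqueness from \Cref{lem:comparison-principle}, existence via Perron, and (a)--(c) by reduction to the literature—matches the paper's. Two points deserve correction.

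For existence, the paper does not split into cases or build quadratic barriers. It exploits the monotonicity of $F_\varepsilon$ in $\varepsilon$ encoded in \Cref{lem:extended-comparison-principle}: the viscosity solution $v^*$ of the \emph{Poisson} problem $F_{1/n}(f;x,\D^2 v^*)=0$ with $v^*=g$ on $\partial\Omega$ is automatically a supersolution of \eqref{def:HJB} for every $\varepsilon\in[0,1/n]$, and the Alexandrov solution $v_*$ of $\det\D^2 v_*=(|f|/n)^n$ with $v_*=g$ on $\partial\Omega$ is a subsolution for every such $\varepsilon$. Both $v^*$ and $v_*$ are known to lie in $C(\overline{\Omega})$ under the standing hypotheses on $\Omega$ and $g$, so the boundary-continuity issue you flag is delegated to those two classical existence theorems rather than reproved via ad hoc barriers. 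By contrast, your ``concave function built from a supporting hyperplane'' will not attain general $g$ on $\partial\Omega$, and your quadratic barriers require strict convexity even for $g\equiv 0$, which the paper does \emph{not} assume.

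For part~(a), the displayed identity $\sup_{A\in\mathbb{S}(0)}(-A{:}M+f\sqrt[n]{\det A})=-n\sqrt[n]{\det M}+f$ is false for the trace-constrained set $\mathbb{S}(0)$ (already for $M=\rho I_n$ the supremum is $-\rho+f/n$, not $-n\rho+f$), and the supremum is never $+\infty$ since $\mathbb{S}(0)$ is compact; you are conflating this with the determinant-constrained formulation $\{A>0:\det A=1\}$. The statement one actually needs, and which the paper simply cites from \cite{FengJensen2017,Gutierrez2016}, is that the supremum vanishes if and only if $M\geq 0$ and $\det M=(f/n)^n$. Parts (b) and (c) are handled exactly as you propose, by citation to \cite{CaffarelliCabre1995,Safonov1988} and to the strict-convexity/interior-regularity results in \cite{Figalli2017}.
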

\begin{proof}
	On the one hand, an elementary reasoning as in the proof of
	\Cref{lem:extended-comparison-principle} proves that the viscosity 
	solution $v^*$ to the Poisson equation 
	$F_{\varepsilon^*}(f_*; x, \D^2 v^*) = 0$ with $\varepsilon^* \coloneqq 1/n$, 
	$f_* \coloneqq f$, and Dirichlet data $v^* = g$ on $\partial \Omega$ 
	is a viscosity supersolution to \eqref{def:HJB}.
	On the other hand, the Alexandrov solution $v_*$ to the Monge--Amp\`ere equation
	\eqref{def:Monge-Ampere} with the right-hand side $|f|$ \cite[Theorem 2.14]{Figalli2017} is the viscosity solution to 
	the HJB equation $F_{\varepsilon_*}(f^*; x, \D^2 v_*) = 0$ 
	with $\varepsilon_* \coloneqq 0$, $f^* \coloneqq |f|$, 
	and Dirichlet data $v_* = g$ on $\partial\Omega$ \cite[Proposition 1.3.4]{Gutierrez2016}.
	Hence, the function $v_*$ is viscosity subsolution to \eqref{def:HJB}.
	Therefore, Perron's method \cite[Theorem 4.1]{CrandallIshiiLions1992} and the comparison principle from \Cref{lem:comparison-principle} conclude the existence and uniqueness of viscosity solutions to \eqref{def:HJB}.
	The combination of \cite[Theorem 3.3 and Theorem 3.5]{FengJensen2017} with \cite[Proposition 1.3.4]{Gutierrez2016} implies the assertion in (a).
	The interior regularity in (b) is a classical result from \cite{CaffarelliCabre1995,Safonov1988}.
	For the Monge--Amp\`ere equation, the interior regularity in (c) holds under the assumption that the Alexandrov solution $u$ is strictly convex \cite[Corollary 4.43]{Figalli2017}.
	Sufficient conditions for this are that $f > 0$ is bounded away from zero and $g \in C^{1,\beta}(\partial \Omega)$ is sufficiently smooth \cite[Corollary 4.11]{Figalli2017}.
\end{proof}
Some comments are in order, before we state a precise version of the $L^\infty$ stability estimate \eqref{ineq:stability-intro} from the introduction.
In general, these estimates arise from the Alexandrov--Bakelman--Pucci 
maximum principle for the uniform elliptic Pucci operator, cf.~\cite{CaffarelliCrandallKocanSwiech1996} and the references therein for further details.
However, the constant therein may depend on the ellipticity constant of $F_\varepsilon$ and therefore, on $\varepsilon$. 
In the case of the HJB equation \eqref{def:HJB} that approximates the Monge--Amp\`ere equation \eqref{def:Monge-Ampere} as $\varepsilon \to 0$, the Alexandrov maximum principle is the key argument to avoid a dependency on $\varepsilon$.
Recall the constant $c_n$ from \Cref{lem:Alexandrov-maximum}.

\begin{theorem}[$L^\infty$ stability]\label{lem:stability-regularized-PDE-L-infty}
	Given a nonnegative parameter $0 \leq \varepsilon \leq 1/n$ and right-hand sides $f_1, f_2 \in C(\overline{\Omega})$,
	where we assume $f_1,f_2\geq0$ if $\varepsilon=0$,
	let $v_1, v_2 \in C(\overline{\Omega})$ be viscosity solutions to the HJB equation $F_\varepsilon(f_j; x, \D^2 v_j) = 0 \text{ in } \Omega$
	for $j \in \{1,2\}$. Then, for any subset $\omega \subset \Omega$,
	\begin{align}
		\|v_1 - v_2\|_{L^\infty(\omega)} \leq \|v_1 - v_2\|_{L^\infty(\partial \Omega)} + \frac{C}{n} \max_{x \in \overline{\omega}} \mathrm{dist}(x,\partial \Omega)^{1/n}\|f_1-f_2\|_{L^n(\Omega)}
		\label{ineq:stability-estimate}
	\end{align}
	with the constant $C \coloneqq c_n \mathrm{diam}(\Omega)^{(n-1)/n}$.
	In particular,
	\begin{align}
		\|v_1 - v_2\|_{L^\infty(\Omega)} 
		\leq \|v_1 - v_2\|_{L^\infty(\partial \Omega)} 
		 + \frac{C}{n} (\mathrm{diam}(\Omega)/2)^{1/n}\|f_1-f_2\|_{L^n(\Omega)}.
		\label{ineq:stability-estimate-global}
	\end{align}
\end{theorem}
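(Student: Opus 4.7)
The plan is to prove, by symmetry, the one-sided bound
\[
    v_1 - v_2 \leq \|v_1 - v_2\|_{L^\infty(\partial\Omega)} + (C/n)\mathrm{dist}(\cdot,\partial\Omega)^{1/n}\|f_1-f_2\|_{L^n(\Omega)}
\]
via a comparison argument against an auxiliary convex corrector; the symmetric bound for $v_2 - v_1$ and taking the supremum over $\omega$ then yields \eqref{ineq:stability-estimate}. Write $M := \|v_1 - v_2\|_{L^\infty(\partial\Omega)}$. The corrector is chosen as the convex Alexandrov solution $w \in C(\overline{\Omega})$ to the Monge--Amp\`ere problem $\det \D^2 w = (|f_1-f_2|/n)^n$ in $\Omega$ with $w = 0$ on $\partial\Omega$, whose existence is classical. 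Since $w$ is convex and vanishes on the boundary of the convex domain $\Omega$, one has $w \leq 0$ throughout $\Omega$, and \Cref{lem:Alexandrov-maximum} combined with $\mathcal L^n(\partial w(\Omega)) \leq \int_\Omega (|f_1-f_2|/n)^n\,\d x$ yields
\[
    |w(x)| \leq \frac{c_n}{n}\mathrm{dist}(x,\partial\Omega)^{1/n}\mathrm{diam}(\Omega)^{(n-1)/n}\|f_1-f_2\|_{L^n(\Omega)}.
\]

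The core step is to verify that $\tilde v := v_2 + M - w$ is a viscosity supersolution of $F_\varepsilon(f_1;x,\D^2 \tilde v) = 0$ in $\Omega$. The algebraic ingredient is the matrix AM--GM inequality $A:B \geq n(\det A)^{1/n}(\det B)^{1/n}$ for positive semidefinite $A, B$, which applied to $B = \D^2 w$ gives $A:\D^2 w \geq |f_1-f_2|\sqrt[n]{\det A}$ for every $A \in \mathbb{S}(\varepsilon)$. If $\psi \in C^2$ is a test function with $\tilde v - \psi$ attaining a local minimum at $x_0 \in \Omega$, then $v_2 - (\psi + w - M)$ attains a local minimum at $x_0$, so the viscosity supersolution property of $v_2$ produces (by compactness of $\mathbb{S}(\varepsilon)$) a matrix $A^* \in \mathbb{S}(\varepsilon)$ satisfying $-A^*:(\D^2\psi(x_0) + \D^2 w(x_0)) + f_2 \sqrt[n]{\det A^*} \geq 0$. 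Combining this with the AM--GM inequality yields
\[
    -A^*:\D^2\psi(x_0) + f_1\sqrt[n]{\det A^*} \geq (f_1 - f_2 + |f_1-f_2|)\sqrt[n]{\det A^*} \geq 0,
\]
so that $F_\varepsilon(f_1; x_0, \D^2\psi(x_0)) \geq 0$ as required.

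Since $\tilde v = v_2 + M \geq v_1$ on $\partial\Omega$, the classical comparison principle of \Cref{lem:comparison-principle} applied to the subsolution $v_1$ and the supersolution $\tilde v$ gives $v_1 \leq \tilde v$ in $\Omega$, i.e., $v_1(x) - v_2(x) \leq M + |w(x)|$. Combining with the Alexandrov bound on $|w|$ from the first step proves \eqref{ineq:stability-estimate}; the global estimate \eqref{ineq:stability-estimate-global} then follows from the elementary geometric fact $\max_{x\in\Omega}\mathrm{dist}(x,\partial\Omega) \leq \mathrm{diam}(\Omega)/2$. The main obstacle is the central step: in general $w$ is only Alexandrov and not $C^2$, so $\psi + w - M$ is not admissible as a test function for $v_2$. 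This is circumvented by approximating $|f_1-f_2|$ by smooth strictly positive functions, invoking Caffarelli's interior $C^{2,\alpha}$ regularity (cf.\ \Cref{prop:properties-HJB}(c)) to obtain smooth correctors $w_\delta$, running the argument classically to conclude $v_1 \leq v_2 + M - w_\delta$, and finally passing to the limit $\delta \to 0$ using uniform convergence of $w_\delta$ to $w$ and stability of viscosity solutions.
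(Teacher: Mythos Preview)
Your argument is correct and follows the same overall template as the paper (comparison against an auxiliary corrector, Alexandrov maximum principle for the corrector, smoothing to justify the test-function manipulation), but it streamlines the paper's proof in two places. First, the paper builds the corrector $w_k$ as the viscosity solution of the \emph{regularised} HJB equation $F_\varepsilon(f_{\Delta,k};\cdot)=0$, then separately compares $w_k$ to the Alexandrov solution $z_k$ of the Monge--Amp\`ere equation in order to invoke \Cref{lem:Alexandrov-maximum}; you go directly to the Monge--Amp\`ere corrector and use the matrix AM--GM inequality $A:D^2w_\delta \geq n(\det A)^{1/n}(\det D^2 w_\delta)^{1/n}$ in place of the paper's sub-additivity-of-the-supremum step together with the intermediate comparison $z_k\le w_k\le 0$. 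Second, because $|f_1-f_2|$ is symmetric in the indices, your one-sided bound immediately gives the other side by swapping $(v_1,f_1)\leftrightarrow(v_2,f_2)$, whereas the paper splits the argument into a Step~1 under the ordering hypotheses $f_2\le f_1$, $v_1\le v_2$ on $\partial\Omega$, and a Step~2 that removes these by introducing the auxiliary solutions $v_*,v^*$ with data $\min\{f_1,f_2\},\max\{f_1,f_2\}$. Your route is therefore somewhat shorter; the paper's route has the minor advantage that for $\varepsilon>0$ its corrector $w_k$ is regular by the elliptic HJB theory (\Cref{prop:properties-HJB}(b)) rather than by Caffarelli's result, though for $\varepsilon=0$ both proofs ultimately rely on \Cref{prop:properties-HJB}(c).

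One small point to tighten: for the computation $f_\delta + f_1 - f_2 \geq 0$ (equivalently, for $\tilde v_\delta=v_2+M-w_\delta$ to be an exact supersolution of $F_\varepsilon(f_1;\cdot)=0$) you need $f_\delta \geq |f_1-f_2|$, i.e.\ the smooth strictly positive approximants should be taken \emph{from above}, as the paper does. Once this is said, the passage to the limit needs only the Alexandrov bound $|w_\delta(x)|\le (C/n)\mathrm{dist}(x,\partial\Omega)^{1/n}\|f_\delta\|_{L^n(\Omega)}$ together with $\|f_\delta\|_{L^n(\Omega)}\to\|f_1-f_2\|_{L^n(\Omega)}$; the uniform convergence $w_\delta\to w$ is true but not required.
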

\begin{proof}
	The proof is divided into two steps.\medskip
	
	\noindent\emph{Step 1:} The first step establishes \eqref{ineq:stability-estimate} under the assumptions $f_2 \leq f_1$ in $\overline{\Omega}$ and $v_1 \leq v_2$ on $\partial \Omega$.
	For $f_\Delta \coloneqq f_1 - f_2 \geq 0$, let the sequence $(f_{\Delta,k})_{k \in \mathbb{N}}$ of smooth functions $f_{\Delta,k} \in C^\infty(\overline{\Omega})$ approximate $f_\Delta \in C(\overline{\Omega})$ from above such that $f_\Delta \leq f_{\Delta,k}$ and $0 < f_{\Delta,k}$ in $\overline{\Omega}$ for all $k \in \mathbb{N}$ and $\lim_{k \to \infty} \|f_k - f_{\Delta,k}\|_{L^\infty(\Omega)} = 0$.
	Let $w_k \in C(\overline{\Omega})$ be viscosity solutions to the PDE, for all $k \in \mathbb{N}$,
	\begin{align}
		F_\varepsilon(f_{\Delta,k}; x, \D^2 w_k) = 0 \text{ in } \Omega \quad&\text{and}\quad w_k = 0 \text{ on } \partial \Omega.
		\label{def:stability-HJB-w-k}
	\end{align}
	Since $v_1 \leq v_2$ on $\partial \Omega$ and $f_2 \leq f_1$ by assumption of Step 1, \Cref{lem:extended-comparison-principle} proves
	\begin{align}\label{ineq:proof-stability-HJB-upper-bound}
		v_1 \leq v_2 \text{ in } \overline{\Omega}.
	\end{align}
	\Cref{prop:properties-HJB}(b)--(c)
	provides the interior regularity $w_k \in C^{2,\alpha}_\mathrm{loc}(\Omega)$ for some positive parameter $\alpha$ that (possibly) depends on $\varepsilon$.
	In particular, $w_k \in C^2(\Omega)$ is a classical solution to the PDE \eqref{def:stability-HJB-w-k}.
	We define the continuous function
	$v_* \coloneqq v_2 - \|v_1 - v_2\|_{L^\infty(\partial \Omega)} + w_k \in C(\overline{\Omega})$.
	Given any $x \in \Omega$ and $\varphi \in C^2(\Omega)$ such that $v_* - \varphi = v_2 - (\|v_1 - v_2\|_{L^\infty(\partial \Omega)} - w_k + \varphi)$ has a local maximum at $x$, the function $\psi \coloneqq \|v_1 - v_2\|_{L^\infty(\partial \Omega)} - w_k + \varphi \in C^2(\Omega)$ is smooth and, therefore, an admissible test function in the definition of viscosity solutions.
	Since $v_2$ is viscosity solution to $F_\varepsilon(f_2; x, \D^2 v_2) = 0$, 
	$F_\varepsilon(f_2; x, \D^2 \psi(x)) \leq 0$ follows.
	This, $\D^2 \psi = \D^2(\varphi - w_k)$, the sub-additivity $\sup(X + Y) \leq \sup X + \sup Y$ of the supremum, $f_\Delta \leq f_{\Delta,k}$, and $F_\varepsilon(f_{\Delta,k}; x, \D^2 w_k(x)) = 0$ from \eqref{def:stability-HJB-w-k} lead to
	\begin{align*}
		F_\varepsilon(f_1; x, \D^2 \varphi(x)) &\leq F_\varepsilon(f_2; x, \D^2 \psi(x)) + F_\varepsilon(f_\Delta; x, \D^2 w_k(x))\\
		&\leq F_\varepsilon(f_2; x, \D^2 \psi(x)) + F_\varepsilon(f_{\Delta,k}; x, \D^2 w_k(x)) \leq 0,
	\end{align*}
	whence $v_*$ is viscosity subsolution to the PDE $F_\varepsilon(f_1; x, \D^2 v) = 0$ in $\Omega$.
	Therefore, $v_* \leq v_1$ on $\partial \Omega$ by design and the comparison principle from \Cref{lem:comparison-principle} provide
	\begin{align}\label{ineq:proof-stability-HJB-lower-bound}
		v_* \leq v_1 \text{ in } \overline{\Omega}.
	\end{align}
	On the one hand, the zero function with $F_\varepsilon(f_{\Delta,k}; x, 0) \geq 0$ is a viscosity supersolution to $F_\varepsilon(f_{\Delta,k}; x, \D^2 w_k) = 0$.
	Hence, the comparison principle from \Cref{lem:comparison-principle} shows $w_k \leq 0$ in $\overline{\Omega}$.
	On the other hand, \Cref{prop:properties-HJB}(a) proves that the Alexandrov solution $z_k \in C(\overline{\Omega})$ to $\det \D^2 z_k = (f_{\Delta,k}/n)^n$ with homogenous boundary is
	viscosity solution to $F_0(f_{\Delta,k}; x, \D^2 z_k) = 0$ and \Cref{lem:extended-comparison-principle} reveals $z_k \leq w_k$, whence $z_k \leq w_k \leq 0$ in $\overline{\Omega}$.
	Consequently, the Alexandrov maximum principle from \Cref{lem:Alexandrov-maximum} and 
	$\mathcal L^n(\partial z_k(\Omega))^{1/n} = \|(f_{\Delta,k}/n)^n\|_{L^1(\Omega)}^{1/n} = \|f_{\Delta,k}\|_{L^n(\Omega)}/n$ imply
	\begin{align}
		0 \leq - w_k \leq - z_k \leq \frac{C}{n}\max_{x \in \overline{\omega}} \mathrm{dist}(x,\partial \Omega)^{1/n}\|f_{\Delta,k}\|_{L^n(\Omega)} \quad\text{in } \overline{\omega}
		\label{ineq:proof-stability-HJB-bound-w-k}
	\end{align}
	for any subset $\omega \subset \Omega$.
	The combination of \eqref{ineq:proof-stability-HJB-upper-bound}--\eqref{ineq:proof-stability-HJB-bound-w-k}
	with $v_* = v_2 - \|v_1 - v_2\|_{L^\infty(\partial \Omega)} + w_k$
	results in
	\begin{align*}
		\|v_1 - v_2\|_{L^\infty(\omega)} \leq \|v_2 - v_*\|_{L^\infty(\omega)} = \|v_1 - v_2\|_{L^\infty(\partial \Omega)} + \|w_k\|_{L^\infty(\omega)}&\\
		\leq \|v_1 - v_2\|_{L^\infty(\partial \Omega)} + \frac{C}{n} \max_{x \in \overline{\omega}} \mathrm{dist}(x,\partial \Omega)^{1/n}\|f_{\Delta,k}\|_{L^n(\Omega)}&.
	\end{align*}
	A passage of the right-hand side to the limit as $k \to \infty$ and $\lim_{k \to \infty} \|f_{\Delta,k}\|_{L^n(\Omega)} = \|f_\Delta\|_{L^n(\Omega)}$ conclude \eqref{ineq:stability-estimate}.\medskip
	
	\noindent\emph{Step 2:} The second step establishes \eqref{ineq:stability-estimate} without the additional assumptions from Step 1.
	For the functions $f_* \coloneqq \min\{f_1,f_2\}$, $f^* \coloneqq \max\{f_1,f_2\}$, and
	$f_\Delta \coloneqq f^* - f_* = |f_1 - f_2| \geq 0$,
	let $v^*,v_* \in C(\overline{\Omega})$ be viscosity solutions to the PDE
	\begin{align}
		F_\varepsilon(f_*; x, \D^2 v^*) = 0 \text{ in } \Omega \quad&\text{and}\quad v^* = \max\{v_1,v_2\} \text{ on } \partial \Omega,
		\label{def:stability-HJB-v^*}\\
		F_\varepsilon(f^*; x, \D^2 v_*) = 0 \text{ in } \Omega \quad&\text{and}\quad v_* = \min\{v_1,v_2\} \text{ on } \partial \Omega,
		\label{def:stability-HJB-v_*}
	\end{align}
	Since $f_* \leq f_j \leq f^*$ and $v_* \leq v_j \leq v^*$ on $\partial \Omega$ for $j \in \{1,2\}$, \Cref{lem:extended-comparison-principle} verifies $v_* \leq \{v_1,v_2\} \leq v^*$ in $\overline{\Omega}$, whence
	\begin{align}
		\|v_1 - v_2\|_{L^\infty(\omega)} \leq \|v^* - v_*\|_{L^\infty(\omega)}\quad\text{for any open subset } \omega \subset \Omega.
		\label{ineq:proof-stability-step-2-v1-v2}
	\end{align}
	The application of Step 1 to the viscosity solutions $v^*,v_*$ of \eqref{def:stability-HJB-v^*}--\eqref{def:stability-HJB-v_*} with $f_* \leq f^*$ and $v_* \leq v^*$ on $\partial \Omega$, and the identity $\max\{a,b\} - \min\{a,b\} = |a - b|$ reveal
	\begin{align*}
		\|v^* - v_*\|_{L^\infty(\omega)} \leq \|v_1 - v_2\|_{L^\infty(\partial \Omega)} + \frac{C}{n} \max_{x \in \overline{\omega}}\mathrm{dist}(x,\partial \Omega)^{1/n}\|f_1 - f_2\|_{L^n(\Omega)}.
	\end{align*}
	The combination of this with \eqref{ineq:proof-stability-step-2-v1-v2} concludes \eqref{ineq:stability-estimate}.
\end{proof}

The stability estimate from \Cref{lem:stability-regularized-PDE-L-infty} motivates a solution concept for the HJB equation \eqref{def:HJB} with $L^n$ right-hand sides.
\begin{lemma}[generalized viscosity solution]\label{lem:generalized-viscosity-solution}
	Given $f \in L^n(\Omega)$, $g \in C(\partial\Omega)$ and $0 \leq \varepsilon \leq 1/n$,
	where we assume $f\geq0$ if $\varepsilon=0$,
	there exists a unique function $u \in C(\overline{\Omega})$ such that $u$ is the uniform limit of any sequence $(u_j)_{j \in \mathbb{N}}$ of viscosity solutions $u_j \in C(\overline{\Omega})$ to
	\begin{align}\label{def:generalized-solution-sequence}
		F_\varepsilon(f_j; x, \D^2 u_j) = 0 \text{ in } \Omega \quad\text{and}\quad u_j = g_j \text{ on } \partial \Omega
	\end{align}
	for right-hand sides $f_j \in C(\overline{\Omega})$ and Dirichlet data $g_j \in C(\overline{\Omega})$ with $\lim_{j \to \infty} \|f - f_j\|_{L^n(\Omega)} = 0$ and $\lim_{j \to \infty} \|g - g_j\|_{L^\infty(\partial \Omega)} = 0$.
	The function $u$ is called generalized viscosity solution to \eqref{def:HJB}.
	If $\varepsilon = 0$ and $f \geq 0$, then the generalized viscosity solution to \eqref{def:HJB} and the Alexandrov solution to \eqref{def:Monge-Ampere} coincide.
\end{lemma}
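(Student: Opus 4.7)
The plan is to deduce this lemma almost entirely from the $L^\infty$ stability estimate of \Cref{lem:stability-regularized-PDE-L-infty}, together with a standard density argument. First I would fix an approximating sequence: by density of $C^\infty(\overline{\Omega}) \cap L^n(\Omega)$ in $L^n(\Omega)$ there exist $f_j \in C(\overline{\Omega})$ with $f_j \to f$ in $L^n(\Omega)$ (and, in the case $\varepsilon=0$, I would replace $f_j$ by $\max\{f_j,0\}$, which still converges to $f\ge 0$ in $L^n$ by the Lipschitz continuity of $t \mapsto t^+$), and by Tietze extension combined with standard mollification the Dirichlet data $g \in C(\partial\Omega)$ may be extended to $\overline{\Omega}$ and approximated by $g_j \in C(\overline{\Omega})$ with $\|g-g_j\|_{L^\infty(\partial\Omega)}\to 0$. \Cref{prop:properties-HJB} then provides a unique viscosity solution $u_j \in C(\overline{\Omega})$ to \eqref{def:generalized-solution-sequence}.

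Next I would apply \Cref{lem:stability-regularized-PDE-L-infty} to the pair $u_i, u_j$. Since the two solutions only differ on the boundary by $g_i - g_j$, a short argument using the comparison principle (translate $u_j$ by the constant $\|g_i-g_j\|_{L^\infty(\partial\Omega)}$ and compare) together with \eqref{ineq:stability-estimate-global} yields
\begin{equation*}
\|u_i - u_j\|_{L^\infty(\Omega)} \le \|g_i - g_j\|_{L^\infty(\partial\Omega)} + \tfrac{C}{n}(\mathrm{diam}(\Omega)/2)^{1/n}\|f_i - f_j\|_{L^n(\Omega)}.
\end{equation*}
Because the right-hand side is Cauchy in $j$, the sequence $(u_j)$ is Cauchy in $C(\overline{\Omega})$ and admits a uniform limit $u \in C(\overline{\Omega})$. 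The same estimate applied between two \emph{different} approximating sequences $(f_j,g_j)$ and $(\tilde f_j, \tilde g_j)$ shows that their limits coincide, which gives uniqueness of $u$ (independence of the approximating sequence) and completes the first part.

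For the identification with the Alexandrov solution when $\varepsilon=0$ and $f\ge 0$, I would choose $f_j\ge 0$ continuous as above. By \Cref{prop:properties-HJB}(a), each $u_j$ is simultaneously the viscosity and the Alexandrov solution to the Monge--Amp\`ere equation with data $(f_j,g_j)$; in particular each $u_j$ is convex and its Monge--Amp\`ere measure equals $(f_j/n)^n \mathcal{L}^n$. The uniform limit $u$ of $(u_j)$ is then convex, satisfies $u = g$ on $\partial\Omega$, and by the classical weak continuity of the Monge--Amp\`ere operator under local uniform convergence of convex functions \cite{Gutierrez2016} its Monge--Amp\`ere measure is the weak-$\ast$ limit of $(f_j/n)^n \mathcal{L}^n$, which equals $(f/n)^n \mathcal{L}^n$ because $f_j \to f$ in $L^n$ implies $(f_j/n)^n \to (f/n)^n$ in $L^1$. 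Hence $u$ is the Alexandrov solution to \eqref{def:Monge-Ampere}, and uniqueness of Alexandrov solutions gives the asserted coincidence.

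The only delicate point is the last step, namely invoking the weak continuity of the Monge--Amp\`ere measure; all other ingredients (density, stability, comparison) are immediate consequences of the material already established. Everything else — existence of the approximating sequence, the Cauchy property, and independence of the sequence chosen — reduces to routine manipulations of the estimate \eqref{ineq:stability-estimate-global}.
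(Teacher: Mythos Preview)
Your proposal is correct and follows essentially the same approach as the paper: both use the stability estimate \eqref{ineq:stability-estimate-global} to show that $(u_j)$ is Cauchy in $C(\overline{\Omega})$, then apply it again between two approximating sequences to obtain independence of the limit, and finally identify the limit with the Alexandrov solution via stability/weak continuity of the Monge--Amp\`ere measure (the paper cites \cite[Corollary~2.12 and Proposition~2.16]{Figalli2017}, you invoke the equivalent weak-continuity statement from \cite{Gutierrez2016}). One small simplification: the extra comparison-principle step you mention for the boundary term is unnecessary, since $u_i=g_i$ and $u_j=g_j$ on $\partial\Omega$ already give $\|u_i-u_j\|_{L^\infty(\partial\Omega)}=\|g_i-g_j\|_{L^\infty(\partial\Omega)}$ directly in \eqref{ineq:stability-estimate-global}.
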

\begin{proof}
	Let $(f_j)_{j \in \mathbb{N}} \subset C(\overline{\Omega})$ (resp.~$(g_j)_{j \in \mathbb{N}} \subset C(\overline{\Omega})$) approximate $f$ in $L^n(\Omega)$ (resp.~$g$ in $C(\partial \Omega)$). For any index $j,k \in \mathbb{N}$, the stability estimate \eqref{ineq:stability-estimate-global} from \Cref{lem:stability-regularized-PDE-L-infty} provides
	\begin{align*}
		\|u_j - u_k\|_{L^\infty(\Omega)} 
		\leq \|g_j - g_k\|_{L^\infty(\partial \Omega)} + \frac{C}{n} (\mathrm{diam}(\Omega)/2)^{1/n}\|f_j - f_k\|_{L^n(\Omega)}.
	\end{align*}
	Since $(f_j)_{j \in \mathbb{N}}$ (resp.~$(g_j)_{j \in \mathbb{N}}$) is a Cauchy sequence in $L^n(\Omega)$ (resp.~$C(\partial \Omega)$),
	this implies that $(u_j)_{j \in \mathbb{N}}$ is a Cauchy sequence 
	in the Banach space $C(\overline{\Omega})$ endowed with the $L^\infty$ norm.
	Therefore, there exists $u \in C(\overline{\Omega})$ with 
	$\lim_{j \to \infty} \|u - u_j\|_{L^\infty(\Omega)} = 0$.
	It remains to prove that $u$ is independent of the choice of the approximation sequences for $f$ and $g$.
	To this end, let $(\widetilde{f}_j)_{j \in \mathbb{N}}$ be another sequence of continuous functions $\widetilde{f}_j \in C(\overline{\Omega})$ with $\lim_{j \to \infty} \|f - \widetilde{f}_j\|_{L^n(\Omega)} = 0$.
	Then the sequence $(\widetilde{u}_j)_{j \in \mathbb{N}}$ of viscosity solutions $\widetilde{u}_j \in C(\overline{\Omega})$ to \eqref{def:generalized-solution-sequence} with $f_j$ replaced by $\widetilde{f}_j$ converges uniformly to some $\widetilde{u} \in C(\overline{\Omega})$. 
	The stability estimate \eqref{ineq:stability-estimate-global}
	from \Cref{lem:stability-regularized-PDE-L-infty} shows
	\begin{align*}
		\|u_j - \widetilde{u}_j\|_{L^\infty(\Omega)}
		\leq \frac{C}{n}(\mathrm{diam}(\Omega)/2)^{1/n}\|f_j - \widetilde{f}_j\|_{L^n(\Omega)}
	\end{align*}
	for any $j \in \mathbb{N}$.
	The right-hand side of this vanishes in the limit and the left-hand side converges to $\|u - \widetilde{u}\|_{L^\infty(\Omega)}$ as $j \to \infty$, whence $u = \widetilde{u}$ in $\overline{\Omega}$.
	If $f \geq 0$, then there exists a sequence $(f_j)_{j \in \mathbb{N}}$ of nonnegative continuous functions $0 \leq f_j \in C(\overline{\Omega})$ with $\lim_{j \to \infty} \|f - f_j\|_{L^\infty(\Omega)}$ (e.g., from convolution with a nonnegative mollifier). 
	\Cref{prop:properties-HJB}(a) provides, for all $j \in \mathbb{N}$, that the viscosity solution $u_j$ to \eqref{def:generalized-solution-sequence} with $\varepsilon = 0$ is the Alexandrov solution to $\det \D^2 u_j = f_j$ in $\Omega$.
	Since $u_j$ converges uniformly to the generalized viscosity solution $u$ to \eqref{def:HJB}, the stability of Alexandrov solutions \cite[Corollary 2.12 and Proposition 2.16]{Figalli2017} concludes that $u$ is the Alexandrov solution to \eqref{def:Monge-Ampere}.
\end{proof}
By approximation of the right-hand sides, the stability estimates from \Cref{lem:stability-regularized-PDE-L-infty} also applies to generalized viscosity solutions to the HJB equation \eqref{def:HJB}.
\begin{corollary}[extended $L^\infty$ stability]\label{lem:extended-stability}
	Given any $0 \leq \varepsilon \leq 1/n$, $f_j \in L^n(\Omega)$,
	where we assume $f_j\geq0$ if $\varepsilon=0$,
	and $g_j \in C(\overline{\Omega})$, the generalized viscosity solutions $v_j \in C(\overline{\Omega})$ to $F_\varepsilon(f_j;x,\D^2 v_j) = 0$ in $\Omega$ for $j \in \{1,2\}$ satisfy \eqref{ineq:stability-estimate}--\eqref{ineq:stability-estimate-global}.
\end{corollary}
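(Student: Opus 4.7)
The plan is a straightforward density/passage-to-the-limit argument: approximate each pair $(f_j,g_j)$ by smooth data, apply \Cref{lem:stability-regularized-PDE-L-infty} to the associated viscosity solutions, and pass the resulting estimate to the limit using \Cref{lem:generalized-viscosity-solution}.

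First, for each $j \in \{1,2\}$, I would fix sequences $(f_j^{(k)})_{k\in\mathbb N} \subset C(\overline{\Omega})$ with $\|f_j - f_j^{(k)}\|_{L^n(\Omega)} \to 0$ and $(g_j^{(k)})_{k\in\mathbb N} \subset C(\overline{\Omega})$ with $\|g_j - g_j^{(k)}\|_{L^\infty(\partial\Omega)} \to 0$. In the case $\varepsilon = 0$ the hypothesis $f_j \geq 0$ is preserved in the approximation (e.g.\ by convolution with a nonnegative mollifier, as noted in the proof of \Cref{lem:generalized-viscosity-solution}). Let $v_j^{(k)} \in C(\overline{\Omega})$ be the viscosity solution to $F_\varepsilon(f_j^{(k)};x,\D^2 v_j^{(k)}) = 0$ in $\Omega$ with $v_j^{(k)} = g_j^{(k)}$ on $\partial\Omega$, which exists and is unique by \Cref{prop:properties-HJB}. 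By the very definition of a generalized viscosity solution, $v_j^{(k)} \to v_j$ uniformly on $\overline{\Omega}$ as $k \to \infty$.

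Second, I would apply \Cref{lem:stability-regularized-PDE-L-infty} to $v_1^{(k)}$ and $v_2^{(k)}$ on an arbitrary subset $\omega \subset \Omega$, obtaining
\begin{align*}
	\|v_1^{(k)} - v_2^{(k)}\|_{L^\infty(\omega)}
	\leq \|g_1^{(k)} - g_2^{(k)}\|_{L^\infty(\partial\Omega)}
	+ \frac{C}{n}\max_{x\in\overline\omega}\mathrm{dist}(x,\partial\Omega)^{1/n}\|f_1^{(k)} - f_2^{(k)}\|_{L^n(\Omega)},
\end{align*}
where I used $v_j^{(k)}|_{\partial\Omega} = g_j^{(k)}$ on the boundary term. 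Since $v_j^{(k)} \to v_j$ uniformly on $\overline{\Omega}$, the left-hand side converges to $\|v_1 - v_2\|_{L^\infty(\omega)}$; the uniform convergence also forces $v_j|_{\partial\Omega} = g_j$, so the boundary term on the right converges to $\|g_1 - g_2\|_{L^\infty(\partial\Omega)} = \|v_1 - v_2\|_{L^\infty(\partial\Omega)}$. Finally, by the reverse triangle inequality in $L^n(\Omega)$, $\|f_1^{(k)} - f_2^{(k)}\|_{L^n(\Omega)} \to \|f_1 - f_2\|_{L^n(\Omega)}$. Passing to the limit $k\to\infty$ yields \eqref{ineq:stability-estimate}, and the choice $\omega = \Omega$ together with $\mathrm{dist}(x,\partial\Omega) \leq \mathrm{diam}(\Omega)/2$ gives \eqref{ineq:stability-estimate-global}.

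I do not anticipate a real obstacle: the only point that requires some care is ensuring the approximating $f_j^{(k)}$ remain nonnegative when $\varepsilon = 0$, so that the regularized problem is well posed for each $k$. This is however already handled in \Cref{lem:generalized-viscosity-solution}, and uniqueness of the generalized viscosity solution there guarantees that the limit does not depend on the particular approximating sequence, which is what makes the limit passage legitimate.
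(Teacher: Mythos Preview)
Your proposal is correct and follows essentially the same density argument as the paper. The only cosmetic difference is that the paper keeps the boundary data fixed at $v_j|_{\partial\Omega}$ (taking the constant sequence $g_j^{(k)} \equiv g_j$, which is already continuous) and only approximates the right-hand sides $f_j$, whereas you also approximate $g_j$; this extra approximation is harmless but unnecessary.
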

\begin{proof}
	For any index $j \in \{1,2\}$, there exists a sequence $(f_{j,k})_{j \in \mathbb{N}}$ of smooth functions $f_{j,k} \in C^\infty(\overline{\Omega})$ that approximates $f_j$ in $L^n(\Omega)$, i.e., $\lim_{k \to \infty} \|f_j - f_{j,k}\|_{L^n(\Omega)} = 0$.
	Given any $j \in \{1,2\}$ and $k \in \mathbb{N}$, let $v_{j,k} \in C(\overline{\Omega})$ denote the viscosity solution to the HJB equation
	$F_\varepsilon(f_{j,k}; x, \D^2 v_{j,k}) = 0$ in $\Omega$ and $v_{j,k} = v_j$ on $\partial \Omega$.
	The $L^\infty$ stability estimate \eqref{ineq:stability-estimate} from \Cref{lem:stability-regularized-PDE-L-infty} shows, for any $k \in \mathbb{N}$, that
	\begin{align*}
		\|v_{1,k} - v_{2,k}\|_{L^\infty(\omega)} &\leq \|v_1 - v_2\|_{L^\infty(\partial \Omega)} + \frac{C}{n} \max_{x \in \overline{\omega}} \mathrm{dist}(x,\partial \Omega)^{1/n}\|f_{1,k} - f_{2,k}\|_{L^n(\Omega)}.
	\end{align*}
	The left-hand side of this converges to $\|v_1 - v_2\|_{L^\infty(\Omega)}$ by the definition of generalized viscosity solutions in \Cref{lem:generalized-viscosity-solution}. Hence, $\lim_{k \to \infty} \|f_{1,k} - f_{2,k}\|_{L^n(\Omega)} = \|f_1 - f_2\|_{L^n(\Omega)}$ concludes the proof.
\end{proof}
\begin{remark}[$L^\infty$ stability for Alexandrov solutions]\label{rem:stability-Alexandrov-solution}	
	If the right-hand sides $0 \leq f_1,f_2 \in L^n(\Omega)$ are nonnegative,
	then the generalized solutions $v_1,v_2$ from \Cref{lem:extended-stability} are Alexandrov solutions to $\det \D^2 v_j = (f_j/n)^n$, cf.~\Cref{lem:generalized-viscosity-solution}.
	Therefore, \Cref{lem:extended-stability} provides $L^\infty$ stability estimates for Alexandrov solutions.
\end{remark}

The convexity of the differential operator $F_\varepsilon$ in $\mathbb{S}$ leads to existence (and uniqueness) of strong solutions $u_\varepsilon \in C(\overline{\Omega}) \cap W^{2,n}_\mathrm{loc}(\Omega)$ to \eqref{def:HJB} for any $\varepsilon > 0$, $f \in L^n(\Omega)$, and $g \in C(\partial \Omega)$ \cite{CaffarelliCrandallKocanSwiech1996}. It turns out that strong solutions are generalized viscosity solutions.
For the purpose of this paper, we only provide a weaker result.

\begin{theorem}[strong solution implies generalized viscosity solution]\label{thm:strongimpliesviscosity}
	Let $0 < \varepsilon \leq 1/n$, $f \in L^n(\Omega)$, 
	and $g \in C(\partial\Omega)$ be given. Suppose that $u_\varepsilon \in W^{2,n}(\Omega)$
	is a strong solution to \eqref{def:HJB} in the sense that \eqref{def:HJB} is satisfied a.e.~in $\Omega$.
	Then this strong solution $u_\varepsilon$ is the unique generalized 
	viscosity solution to \eqref{def:HJB}.
\end{theorem}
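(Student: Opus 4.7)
The plan is to approximate $f$ in $L^n$ by continuous functions and to transfer convergence to the strong solution $u_\varepsilon$ through an Alexandrov--Bakelman--Pucci (ABP) stability estimate for uniformly elliptic operators.

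First, I select a sequence $(f_j)_{j\in\mathbb{N}} \subset C(\overline{\Omega})$ with $\|f-f_j\|_{L^n(\Omega)} \to 0$. \Cref{prop:properties-HJB} supplies viscosity solutions $u_j \in C(\overline{\Omega})$ to $F_\varepsilon(f_j;x,\D^2 u_j)=0$ in $\Omega$ with $u_j=g$ on $\partial\Omega$, and \Cref{lem:generalized-viscosity-solution} identifies their uniform limit with the generalized viscosity solution $u$ to \eqref{def:HJB}. It therefore suffices to prove $u_j\to u_\varepsilon$ uniformly on $\overline{\Omega}$; the two limits must then coincide.

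By \Cref{prop:properties-HJB}(b) each $u_j$ belongs to $C^{2,\kappa}_{\mathrm{loc}}(\Omega)$ and is, in particular, a pointwise (hence a.e.) strong solution. Set $w_j \coloneqq u_\varepsilon - u_j$; then $w_j \in W^{2,n}_{\mathrm{loc}}(\Omega)\cap C(\overline{\Omega})$ and $w_j|_{\partial\Omega}=0$. The key observation is that, for $\varepsilon>0$, every $A\in\mathbb{S}(\varepsilon)$ satisfies $\varepsilon I \le A \le (1-(n-1)\varepsilon)I$, whereas AM--GM together with $\mathrm{tr}\,A=1$ yields $(\det A)^{1/n}\le 1/n$. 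These two facts render $F_\varepsilon$ uniformly elliptic and globally Lipschitz in the right-hand side:
\begin{align*}
|F_\varepsilon(f_1;x,M) - F_\varepsilon(f_2;x,M)| &\le \tfrac{1}{n}|f_1(x)-f_2(x)|,\\
\mathcal{P}^-_\varepsilon(M_1 - M_2) \le F_\varepsilon(f;x,M_1) - F_\varepsilon(f;x,M_2) &\le \mathcal{P}^+_\varepsilon(M_1 - M_2),
\end{align*}
with Pucci extremal operators $\mathcal{P}^\pm_\varepsilon$ of ellipticity constants $\varepsilon$ and $1-(n-1)\varepsilon$.

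Subtracting the a.e.\ identities $F_\varepsilon(f;x,\D^2 u_\varepsilon)=0=F_\varepsilon(f_j;x,\D^2 u_j)$ and inserting $M_1=\D^2 u_\varepsilon$, $M_2=\D^2 u_j$ into the displayed bounds produces
\begin{align*}
\mathcal{P}^-_\varepsilon(\D^2 w_j) \le \tfrac{1}{n}|f-f_j|,\qquad \mathcal{P}^+_\varepsilon(\D^2 w_j) \ge -\tfrac{1}{n}|f-f_j| \quad \text{a.e.\ in }\Omega.
\end{align*}
The classical ABP maximum principle for $W^{2,n}_{\mathrm{loc}}\cap C(\overline{\Omega})$ functions (e.g.\ \cite{CaffarelliCabre1995}), applied twice since $w_j|_{\partial\Omega}=0$, furnishes a constant $C_\varepsilon=C(n,\varepsilon,\mathrm{diam}(\Omega))$ with $\|w_j\|_{L^\infty(\Omega)}\le C_\varepsilon\|f-f_j\|_{L^n(\Omega)}$. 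Letting $j\to\infty$ gives $u_\varepsilon=u$ and, \emph{a~fortiori}, uniqueness of the strong solution.

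The main technical input is the ABP estimate in $W^{2,n}_{\mathrm{loc}}$ for uniformly elliptic operators, which I invoke rather than reprove. Its constant inevitably degenerates as $\varepsilon \searrow 0$, which is precisely the reason why \Cref{sec:HJB} built an $\varepsilon$-independent stability theory based on the convexity-driven Alexandrov maximum principle; here, with $\varepsilon$ fixed, the classical uniformly-elliptic ABP is enough.
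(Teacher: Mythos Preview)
Your argument is correct (with the tiny caveat that \Cref{prop:properties-HJB}(b) needs $f_j\in C^{0,\alpha}$, so you should pick smooth approximants rather than merely continuous ones), but it follows a genuinely different route from the paper.

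The paper approximates the \emph{solution}: it takes $v_j\in C^2(\overline{\Omega})$ with $v_j\to u_\varepsilon$ in $W^{2,n}(\Omega)$, and then uses \Cref{lem:computation-stability-RHS} to manufacture a continuous right-hand side $f_j\coloneqq\xi(\D^2 v_j)$ for which $v_j$ is a classical solution. The Lipschitz bound in \Cref{lem:computation-stability-RHS} transfers $W^{2,n}$ convergence of $v_j$ into $L^n$ convergence of $f_j$, and Sobolev embedding gives uniform convergence of $v_j$ and of the boundary data; \Cref{lem:generalized-viscosity-solution} then finishes. You instead approximate the \emph{data} $f$ and compare the resulting viscosity solutions $u_j$ to the given strong solution $u_\varepsilon$ via the classical ABP estimate for $W^{2,n}_{\mathrm{loc}}$ functions. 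Your route is perhaps more conceptually direct, but it imports an external maximum principle whose constant depends on $\varepsilon$; the paper's route is entirely self-contained, relying only on \Cref{lem:computation-stability-RHS} and the Sobolev embedding, and it never needs $u_j$ to have interior $C^2$ regularity.
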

The proof of \Cref{thm:strongimpliesviscosity} utilizes the following elementary result.
\begin{lemma}[computation and stability of right-hand side]\label{lem:computation-stability-RHS}
	Let $\varepsilon > 0$ be given.
	For any $M \in \mathbb{S}$, there exists a unique $\xi(M) \in \mathbb{R}$ such that $\max_{A \in \mathbb{S}(\varepsilon)}(- A: M + \xi(M)\sqrt[n]{\det A}) = 0$.
	Furthermore, any $M, N \in \mathbb{S}$ satisfy the stability $|\xi(M) - \xi(N)| \leq C(\varepsilon)|M - N|$ with a constant depending on the regularization parameter $\varepsilon$.
\end{lemma}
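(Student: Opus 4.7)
The plan is to study, for fixed $M \in \mathbb{S}$, the scalar function $g_M(\xi) \coloneqq \max_{A \in \mathbb{S}(\varepsilon)}(-A:M + \xi \sqrt[n]{\det A})$ and to derive both claims from two elementary observations about $\mathbb{S}(\varepsilon)$: every $A \in \mathbb{S}(\varepsilon)$ has nonnegative eigenvalues $\lambda_1,\dots,\lambda_n$ with $\sum_i \lambda_i = 1$, so $|A|^2 = \sum_i \lambda_i^2 \leq 1$; and the constraints $A \geq \varepsilon I$ and $\mathrm{tr}\,A = 1$ force each $\lambda_i$ into $[\varepsilon, 1-(n-1)\varepsilon]$, so the compactness of $\mathbb{S}(\varepsilon)$ ensures $\delta(\varepsilon) \coloneqq \min_{A \in \mathbb{S}(\varepsilon)} \sqrt[n]{\det A} > 0$.

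\textbf{Existence and uniqueness.} The function $g_M$ is the supremum of affine functions of $\xi$ over the compact set $\mathbb{S}(\varepsilon)$, hence convex and continuous in $\xi$. Choosing for $\xi_1 < \xi_2$ any maximiser $A^\star$ of $g_M(\xi_1)$ and using it as a competitor for $g_M(\xi_2)$ yields $g_M(\xi_2) - g_M(\xi_1) \geq (\xi_2 - \xi_1)\sqrt[n]{\det A^\star} \geq \delta(\varepsilon)(\xi_2-\xi_1)$, so $g_M$ is strictly increasing with slope at least $\delta(\varepsilon)$. Coercivity follows similarly: the competitor $A = I/n \in \mathbb{S}(\varepsilon)$ gives $g_M(\xi) \geq -\mathrm{tr}(M)/n + \xi/n \to +\infty$ as $\xi \to +\infty$, while $g_M(\xi) \leq |M| + \xi\,\delta(\varepsilon) \to -\infty$ as $\xi \to -\infty$ using $-A:M \leq |M|$ and $\sqrt[n]{\det A} \geq \delta(\varepsilon)$. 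The intermediate value theorem then produces a unique root $\xi(M) \in \mathbb{R}$.

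\textbf{Stability.} Let $A_N \in \mathbb{S}(\varepsilon)$ attain the maximum in $g_N(\xi(N)) = 0$, so that $-A_N:N + \xi(N)\sqrt[n]{\det A_N} = 0$, and regard $A_N$ as a merely feasible (not necessarily optimal) competitor in $g_M(\xi(M)) = 0$ to obtain $-A_N:M + \xi(M)\sqrt[n]{\det A_N} \leq 0$. Subtracting these two relations and using $A_N:(M-N) \leq |A_N|\,|M-N| \leq |M-N|$ gives $(\xi(M) - \xi(N))\sqrt[n]{\det A_N} \leq |M-N|$, and dividing by $\sqrt[n]{\det A_N} \geq \delta(\varepsilon)$ furnishes one direction of the Lipschitz bound. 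The reverse inequality follows verbatim on swapping the roles of $M$ and $N$ and using a maximiser $A_M$ of $g_M(\xi(M)) = 0$, so that $|\xi(M) - \xi(N)| \leq \delta(\varepsilon)^{-1}|M-N|$ and one may take $C(\varepsilon) \coloneqq \delta(\varepsilon)^{-1}$.

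I expect no genuine obstacle in this argument; the one structurally important point is that the uniform lower bound $\sqrt[n]{\det A} \geq \delta(\varepsilon) > 0$ on the compact set $\mathbb{S}(\varepsilon)$ simultaneously drives the strict monotonicity of $g_M$ (which gives existence and uniqueness of $\xi(M)$) and the Lipschitz constant (which gives stability). Since $\delta(\varepsilon) \to 0$ as $\varepsilon \to 0$, the $\varepsilon$-dependence in $C(\varepsilon)$ is intrinsic and the construction cannot be carried over to the Monge--Amp\`ere limit $\varepsilon = 0$.
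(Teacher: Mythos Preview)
Your proof is correct and follows essentially the same approach as the paper: both study the strictly increasing function $\xi \mapsto \max_{A \in \mathbb{S}(\varepsilon)}(-A:M + \xi\sqrt[n]{\det A})$ for existence and uniqueness, and both obtain stability by comparing the optimality condition at one matrix against feasibility at the other (the paper phrases this via $\max X - \max Y \leq \max(X-Y)$, which is the same mechanism). Your constant $\delta(\varepsilon)^{-1}$ coincides with the paper's explicit value $1/\sqrt[n]{\varepsilon^{n-1}(1-(n-1)\varepsilon)}$, since the minimum of $\det A$ over $\mathbb{S}(\varepsilon)$ is attained when $n-1$ eigenvalues equal $\varepsilon$.
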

\begin{proof}
	Given a symmetric matrix $M \in \mathbb{S}$, define the continuous real-valued function
	\begin{align}
		\Psi_{M}(\xi) \coloneqq \max_{A \in \mathbb{S}(\varepsilon)} (-A:M + \xi\sqrt[n]{\det A}).
	\end{align}
	Since $\Psi_M$ is strictly monotonically increasing with the limits $\lim_{\xi \to -\infty} \Psi_M = -\infty$ and $\lim_{\xi \to \infty} \Psi_M = +\infty$, there exists a unique root $\xi(M)$ such that $\Psi_M(\xi(M)) = 0$.
	For any $M,N \in \mathbb{S}$, the inequality $\max X - \max Y \leq \max (X - Y)$ shows
	\begin{align}\label{ineq:proof-uniqueness-rhs-HJB}
		0 = \Psi_{M}(\xi(M)) - \Psi_{N}(\xi(N)) \leq \Psi_{M - N}(\xi(M) - \xi(N)).
	\end{align}
	Let $A \in \mathbb{S}(\varepsilon)$ be chosen such that $\Psi_{M - N}(\xi(M) - \xi(N)) = -A:(M-N) + (\xi(M) - \xi(N))\sqrt[n]{\det A}$. Then it follows from \eqref{ineq:proof-uniqueness-rhs-HJB} that
	\begin{align}
		\xi(N) - \xi(M) \leq A:(N-M)/\sqrt[n]{\det A}\leq |A||M-N|/\sqrt[n]{\det A}.
		\label{ineq:proof-uniqueness-rhs-HJB-one-side}
	\end{align}
	Exchanging the roles of $M$ and $N$ in \eqref{ineq:proof-uniqueness-rhs-HJB-one-side} leads to 
	$\xi(M) - \xi(N) \leq |B||M-N|/\sqrt[n]{\det B}$ for some $B \in \mathbb{S}(\varepsilon)$. Since $|A|/\sqrt[n]{\det A} \leq 1/(\sqrt{\varepsilon^{n-1}(1-(n-1)\varepsilon)})$ holds for any $A \in \mathbb{S}(\varepsilon)$, the combination of this with \eqref{ineq:proof-uniqueness-rhs-HJB-one-side} concludes $|\xi(N) - \xi(M)| \leq |M-N|/\sqrt[n]{\varepsilon^{n-1}(1-(n-1)\varepsilon)}$.
\end{proof}
\begin{proof}[Proof of \Cref{thm:strongimpliesviscosity}]
	Let $v_j \in C^2(\overline{\Omega})$ be a sequence of smooth functions that approximate $u_\varepsilon$ with $\lim_{j \to \infty} \|u_\varepsilon - v_j\|_{W^{2,n}(\Omega)} = 0$.
	\Cref{lem:computation-stability-RHS} proves that there exists
	a (unique) function $f_j \coloneqq \xi(\D^2 v_j)$ with $F_\varepsilon(f_j;x,\D^2 v_j) = 0$ in $\Omega$.
	We apply the stability from \Cref{lem:computation-stability-RHS} twice. First, $|f_j(x) - f_j(y)| \leq C(\varepsilon)|\D^2 v_j(x) - \D^2 v_j(y)|$ for any $x,y \in \Omega$ implies continuity $f_j \in C(\overline{\Omega})$ of $f_j$ and second, $|f(x) - f_j(x)| \leq C(\varepsilon)|\D^2 u_\varepsilon(x) - \D^2 v_j(x)|$ for a.e.~$x \in \Omega$ implies the convergence $\lim_{j \to \infty} \|f - f_j\|_{L^n(\Omega)} = 0$.
	Notice from the Sobolev embedding that $v_j$ converges uniformly to $u_\varepsilon$ in $\overline{\Omega}$ as $j \to \infty$.
	In conclusion, $u_\varepsilon$ is the uniform limit of classical (and in particular, viscosity) solutions $v_j$ such that the corresponding right-hand sides and Dirichlet data converge in the correct norm, i.e.,~$\lim_{j \to \infty} \|f - f_j\|_{L^n(\Omega)} = 0$ and $\lim_{j \to \infty}\|g - v_j\|_{L^\infty(\partial \Omega)} = 0$. \Cref{lem:generalized-viscosity-solution} proves that $u_\varepsilon$ is the unique (generalized) viscosity solution.
\end{proof}

\section{Convergence of the regularization}\label{sec:convergence}
This section establishes the uniform convergence of the generalized viscosity solution $u_\varepsilon$ of the regularized HJB equation \eqref{def:HJB} to the Alexandrov solution $u$ of the Monge--Amp\`ere equation \eqref{def:Monge-Ampere} for any nonnegative right-hand side $0 \leq f \in L^n(\Omega)$.
The proof is carried out in any space dimension $n$ and does not rely on the concept of strong solutions in two space dimensions from \cite{SmearsSueli2013,SmearsSueli2014}.
It departs from a main result of \cite{gt2021}.
\begin{theorem}[convergence of regularization for smooth data]\label{thm:convergence-smooth-data}
	Let $f \in C^{0,\alpha}(\Omega)$,
	$0 < \lambda \leq f \leq \Lambda$,
	and $g \in C^{1,\beta}(\partial\Omega)$
	with positive constants
	$0 < \alpha,\beta < 1$ and $0 < \lambda \leq \Lambda$ be given.
	Let
	$u \in C(\overline{\Omega}) \cap C^{2,\alpha}_{\mathrm{loc}}(\Omega)$
	be the unique classical solution to \eqref{def:Monge-Ampere} from \Cref{prop:properties-HJB}(c).
	\begin{enumerate}[wide]
		\item[(a)] For any sequence
		$0<(\varepsilon_j)_{j \in \mathbb{N}}\leq 1/n$ with $\lim_{j \to \infty} \varepsilon_j = 0$, the sequence $(u_{\varepsilon_j})_{j \in \mathbb{N}}$ of classical solutions $u_{\varepsilon_j} \in C(\overline{\Omega}) \cap C^2(\Omega)$ to \eqref{def:HJB} with $\varepsilon \coloneqq \varepsilon_j$ from \Cref{prop:properties-HJB}(b) converges uniformly to $u$ in $\Omega$ as $j \to \infty$.
		\item[(b)] If $g\equiv 0$, $f \in C^{2,\alpha}(\Omega)$, and
		$f > 0$ in $\overline{\Omega}$,
		then, for some constant $C$ and all $0 < \varepsilon \leq 1/n$, 
		the generalized viscosity solution $u_\varepsilon$
		to \eqref{def:HJB}
		satisfies
		\begin{align*}
			\|u - u_\varepsilon\|_{L^\infty(\Omega)}
			\leq C \varepsilon^{1/(n^2(2n+3))}.
		\end{align*}
	\end{enumerate}
\end{theorem}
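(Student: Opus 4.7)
The plan combines three ingredients. First, I would establish a sandwich $u \leq u_\varepsilon \leq w$ in $\overline{\Omega}$, where $w \in C(\overline{\Omega}) \cap C^2(\Omega)$ is the classical solution of the Poisson problem $\Delta w = f$ in $\Omega$ with $w = g$ on $\partial\Omega$. The lower inequality is \Cref{lem:extended-comparison-principle} with $\varepsilon_\ast = 0$, $\varepsilon^\ast = \varepsilon$, and $f_\ast = f^\ast = f$; for the upper inequality, testing the supremum defining $F_\varepsilon$ at the single matrix $A = I/n \in \mathbb{S}(\varepsilon)$ yields $F_\varepsilon(f;x,\D^2 w) \geq -\Delta w/n + f/n = 0$, so $w$ is a classical supersolution of $F_\varepsilon(f;\cdot,\D^2 w) = 0$ and \Cref{lem:comparison-principle} gives $u_\varepsilon \leq w$. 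Since $u = w = g$ on $\partial \Omega$, the sandwich bequeaths to $u_\varepsilon$ a joint modulus of continuity at every boundary point, independent of $\varepsilon$.

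The second, and key, ingredient is the observation that the regularisation is \emph{inactive} at $\D^2 u$ on any fixed interior compactum for all small enough $\varepsilon$. Fix $K \subset\subset \Omega$. By \Cref{prop:properties-HJB}(c), $u \in C^{2,\alpha}(K)$ with $\D^2 u$ continuous and strictly positive definite on $K$. A Lagrange-multiplier computation identifies the unique maximiser of $A \mapsto -A:M + f(\det A)^{1/n}$ over $\mathbb{S}(0)$ at $M = \D^2 u(x)$ as $A^\star(x) = M^{-1}/\mathrm{tr}(M^{-1})$, whose smallest eigenvalue satisfies $\lambda_{\min}(A^\star(x)) \geq \lambda_{\min}(M)/(n\lambda_{\max}(M))$. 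Combining with the Monge--Amp\`ere identity $\det M = (f/n)^n \geq (\lambda/n)^n$, we find $A^\star(x) \in \mathbb{S}(\varepsilon)$ whenever $\lambda_{\max}(\D^2 u(x))^n \leq (\lambda/n)^n/(n\varepsilon)$, in which case $F_\varepsilon(f;x,\D^2 u(x)) = F_0(f;x,\D^2 u(x)) = 0$ on $K$. Since $u_\varepsilon$ is also a classical solution on $K$ by \Cref{prop:properties-HJB}(b), \Cref{lem:comparison-principle} applied on $K$ gives
\[
\|u - u_\varepsilon\|_{L^\infty(K)} \leq \|u - u_\varepsilon\|_{L^\infty(\partial K)}.
\]

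For part (a), I would choose $K = \Omega_\delta \coloneqq \{x \in \Omega : \mathrm{dist}(x,\partial\Omega) > \delta\}$; by the first ingredient, both $\|u - u_\varepsilon\|_{L^\infty(\Omega \setminus \Omega_\delta)}$ and $\|u - u_\varepsilon\|_{L^\infty(\partial\Omega_\delta)}$ are controlled by a joint modulus $\omega(\delta)$ of $u$ and $w$, independent of $\varepsilon$. Given $\eta > 0$, pick $\delta$ with $\omega(\delta) < \eta/2$ and then $\varepsilon$ small enough for the second ingredient to apply on $\Omega_\delta$; combining the two ingredients yields $\|u - u_\varepsilon\|_{L^\infty(\Omega)} < \eta$, hence the uniform convergence in (a).

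Part (b) is the quantitative version. With $g \equiv 0$, \Cref{lem:Alexandrov-maximum} gives $|u(x)| \leq C\mathrm{dist}(x,\partial\Omega)^{1/n}$, and the Lipschitz regularity of the Poisson solution yields $|w(x)| \leq C \mathrm{dist}(x,\partial\Omega)$; the sandwich therefore provides $\|u - u_\varepsilon\|_{L^\infty(\Omega \setminus \Omega_\delta)} \leq C \delta^{1/n}$. A quantitative Pogorelov-type interior $C^2$-estimate of the form $\lambda_{\max}(\D^2 u(x)) \leq C\mathrm{dist}(x,\partial\Omega)^{-(2n+3)}$ converts the interior-inactivity threshold into $\varepsilon \leq C \delta^{n(2n+3)}$. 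Balancing the two error contributions by $\delta \sim \varepsilon^{1/(n(2n+3))}$ produces $\|u - u_\varepsilon\|_{L^\infty(\Omega)} \leq C \varepsilon^{1/(n^2(2n+3))}$. The main obstacle is this Pogorelov-type interior estimate with the precise polynomial exponent $2n+3$: it is what ultimately fixes the numerical value of the rate in (b), and tracking it through the combination of interior regularity for smooth Monge--Amp\`ere solutions and the Alexandrov boundary decay $\mathrm{dist}^{1/n}$ is the delicate technical step of the argument.
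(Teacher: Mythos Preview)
Your approach is essentially the same as the paper's, which defers the proof to \cite[Theorem~4.1]{gt2021} with \Cref{lem:regularization} as the only additional ingredient in general dimension. Your ``second ingredient'' (the maximiser $A^\star = M^{-1}/\mathrm{tr}(M^{-1})$ lies in $\mathbb{S}(\varepsilon)$ whenever $\lambda_{\max}(M)^n \lesssim 1/\varepsilon$) is a direct reformulation of \Cref{lem:regularization}, and the sandwich with the Poisson barrier together with the $\delta$--$\varepsilon$ balancing through a Pogorelov-type interior bound is precisely the structure of the argument in \cite{gt2021}.
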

\begin{proof}
	The proof of \Cref{thm:convergence-smooth-data} can follow the lines of the proof of \cite[Theorem 4.1]{gt2021}, where \Cref{lem:regularization} below replaces its counterpart \cite[Lemma 4.2]{gt2021} in two space dimensions. We note that the assumption $g \in H^2(\Omega)$ in \cite[Theorem 4.1]{gt2021} is only required for the existence of strong solutions $u_\varepsilon \in H^2(\Omega)$ and can be dropped. Further details of the proof are omitted.
\end{proof}
\begin{lemma}[effect of regularization]\label{lem:regularization}
	Given $0 < \varepsilon \leq 1/n$, $M \in \mathbb{S}$, and $\xi > 0$, suppose that $|M|_n^n \leq \xi^n(1/\varepsilon - (n-1))/n^n$ and $\max_{A \in \mathbb{S}(0)} (-A:M + \xi\sqrt{\det A}) = 0$, then $\max_{A \in \mathbb{S}(\varepsilon)} (-A:M + \xi\sqrt{\det A}) = 0$.
\end{lemma}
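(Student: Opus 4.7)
The plan is to show that the (unconstrained) maximiser of $G(A)\coloneqq-A:M+\xi(\det A)^{1/n}$ over $\mathbb{S}(0)$ already lies in the restricted set $\mathbb{S}(\varepsilon)$. Since $\mathbb{S}(\varepsilon)\subset\mathbb{S}(0)$, this immediately gives $\sup_{\mathbb{S}(\varepsilon)}G\le\sup_{\mathbb{S}(0)}G=0$, while the reverse inequality follows by evaluating $G$ at the maximiser in question.

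First I would analyse the structure of a maximiser $A^{*}\in\mathbb{S}(0)$. Testing $\sup_{\mathbb{S}(0)}G=0$ with rank-one matrices concentrated on eigenvectors of $M$ shows $M\ge 0$; a perturbation $M\mapsto M+\delta I$ followed by $\delta\searrow 0$ reduces everything to $M>0$. In that case, concavity of $A\mapsto(\det A)^{1/n}$ on the positive cone makes $G$ concave, so the maximiser is characterised by the KKT conditions. Differentiating the Lagrangian shows that $A^{*}$ commutes with $M$ and, denoting the eigenvalues of $M$ by $\tilde m_1\ge\dots\ge\tilde m_n>0$, the corresponding eigenvalues of $A^{*}$ take the form $\alpha_i=c/(\tilde m_i+\mu)$ where $\mu$ is the Lagrange multiplier for $\mathrm{tr}\,A=1$ and $c>0$ is fixed by this normalisation. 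A short computation shows $G(A^{*})=\mu$, so the hypothesis $G(A^{*})=0$ forces $\mu=0$, whence $\alpha_i=c/\tilde m_i$ with $c=1/\mathrm{tr}(M^{-1})$ and $\det M=(\xi/n)^n$.

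It remains to establish $A^{*}\ge\varepsilon I$, i.e.\ $\alpha_n=c/\tilde m_1\ge\varepsilon$, equivalently $\sum_{k=2}^{n}(\tilde m_1/\tilde m_k)\le 1/\varepsilon-1$. Setting $r_k\coloneqq\tilde m_1/\tilde m_k\ge 1$ for $k=2,\dots,n$, the hypothesis combined with $\det M=(\xi/n)^n$ yields
\begin{equation*}
\prod_{k=2}^{n}r_k=\frac{\tilde m_1^n}{\det M}\le\frac{|M|_n^n}{\det M}\le\frac{1}{\varepsilon}-(n-1)\eqqcolon K.
\end{equation*}
The conclusion then reduces to the elementary claim: any $r_2,\dots,r_n\ge 1$ with $\prod_{k=2}^{n}r_k\le K$ satisfy $\sum_{k=2}^{n}r_k\le K+(n-2)=1/\varepsilon-1$. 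This is proved by induction on $n-1$: at a fixed product the convex map $r\mapsto r+K/r$ is maximised at the endpoints of $[1,K]$, giving the extreme configuration $(K,1,\dots,1)$ with sum $K+n-2$.

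I expect the main obstacle to be the symmetric-function bookkeeping in the last step, where the bound on the power sum $|M|_n^n$ has to be combined with the identity $\det M=(\xi/n)^n$ (coming from the zero-max hypothesis) and converted into the eigenvalue-ratio inequality via the corner argument above. The Lagrange analysis and the reduction to the positive definite case are routine, but the precise form of the hypothesis is exactly what is needed: the exponents $n$ in $|M|_n^n$ and the shift $n-1$ in $1/\varepsilon-(n-1)$ appear naturally through $\tilde m_1^n\le|M|_n^n$ and the inductive corner estimate for $r_2,\dots,r_n$.
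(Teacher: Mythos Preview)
Your proposal is correct and follows essentially the same route as the paper: both identify the maximiser over $\mathbb{S}(0)$ explicitly (its eigenvalues are proportional to $1/\varrho_j$) and then verify that its minimal eigenvalue is at least $\varepsilon$. Your elementary inequality $\sum_{k\ge 2} r_k \le \prod_{k\ge 2} r_k + (n-2)$ for $r_k\ge 1$ is, after the substitution $r_k = t_k/t_1$, precisely the paper's bound $t_1\cdots t_n \ge t_1^{n-1}(1-(n-1)t_1)$; the only cosmetic difference is that the paper outsources the Lagrange analysis and the identity $\det M=(\xi/n)^n$ to Krylov, whereas you derive them directly via KKT.
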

\begin{proof}
	The assumption $\max_{A \in \mathbb{S}(0)} (-A:M + \xi\sqrt{\det A}) = 0$ implies that $M > 0$ is positive definite and $\det M = (\xi/n)^n$ \cite[p.~51]{Krylov1987}.
	Let $\varrho_1,\dots,\varrho_n$ denote the positive eigenvalues of $M$
	and $t_j \coloneqq \varrho_j^{-1}/(\sum_{k=1}^n \varrho_k^{-1})$
	for $j = 1,\dots,n$. By design of $t_j$,
	\begin{align*}
		\varrho_j^{-1} 
		 = t_j\left(
		     \frac{\varrho_1^{-1} \dots \varrho_n^{-1}}{t_1 \dots t_n}
		       \right)^{1/n},
	\end{align*}
	whence $\varrho_j = \xi(t_1 \dots t_n)^{1/n}/(n t_j)$.
	Without loss of generality, suppose that $t_1 \leq t_2 \leq \dots \leq t_n$.
	The elementary bound $t_1 \dots t_n \geq t_1^{n-1}(1-(n-1)t_1)$ 
	proves
	\begin{align*}
		\xi^n(1-(n-1)t_1)/t_1 \leq \xi^n(t_1 \dots t_n)/(nt_1)^n 
		=
		n^n \varrho_1^n 
		\leq n^n|M|_n^n.
	\end{align*}
	Hence, $1/t_1 \leq n^n|M|_n^n/\xi^n + (n-1) \leq 1/\varepsilon$ by assumption and so, $t_1 \geq \varepsilon$. In particular, $\varepsilon \leq t_1 \leq \dots \leq t_n$ and $t_1 + \dots + t_n = 1$.
	Notice that $t \coloneqq (t_1,\dots,t_n) \in \mathbb{R}^n$ maximizes the scalar-valued function $g : \mathbb{R}^{n} \to \mathbb{R}$ with
	\begin{align*}
		\psi(s) \coloneqq -s_1 \varrho_1 - \dots - s_n \varrho_n + \xi\sqrt[n]{s_1 \dots s_n}
	\end{align*}
	among $s \in S(0)$ with $S(\varepsilon) \coloneqq \{s = (s_1,\dots,s_n) : s \geq \varepsilon \text{ and } s_1 + \dots + s_n = 1\}$.
	Since $\psi(t) = \max_{s \in S(0)} \psi(s) = \max_{A \in \mathbb{S}(0)} (-A:M + \xi\sqrt{\det A})$ \cite[p.~51--52]{Krylov1987} and $t \in S(\varepsilon)$, this implies that $0 = \psi(t) = \max_{A \in \mathbb{S}(\varepsilon)} (-A:M + \xi\sqrt{\det A})$.
\end{proof}
The approximation of nonsmooth data leads to the following convergence result under (almost) minimal assumptions 
(general Borel measures as right-hand sides are excluded).
\begin{theorem}[convergence of regularization]\label{thm:convergence}
	Let a sequence $(\varepsilon_j)_{j \in \mathbb{N}} \subset (0,1/n]$ with $\lim_{j \to \infty} \varepsilon_j = 0$, a nonnegative right-hand side $0 \leq f \in L^n(\Omega)$, and Dirichlet data $g \in C(\partial\Omega)$ be given.
	Then the sequence $(u_j)_{j \in \mathbb{N}}$ of generalized viscosity solutions $u_j \in C(\overline{\Omega})$ to
	\begin{align*}
		F_{\varepsilon_j}(f; x, \D^2 u_j) = 0 \text{ in } \Omega \quad\text{and}\quad u_j = g \text{ on } \partial \Omega
	\end{align*}
	converges uniformly $\lim_{j \to \infty} \|u - u_j\|_{L^\infty(\Omega)} = 0$ to the Alexandrov solution $u$ to the Monge--Amp\`ere equation \eqref{def:Monge-Ampere}.
\end{theorem}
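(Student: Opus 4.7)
The plan is to reduce to the smooth-data convergence of \Cref{thm:convergence-smooth-data} by a density argument and exploit the $\varepsilon$-independent $L^\infty$ stability of \Cref{lem:extended-stability} to transfer convergence from smooth data back to the rough data $(f,g)$.

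For each $k \in \mathbb{N}$ I construct smooth approximating data $(f_k, g_k)$. Set $f_k \coloneqq \varrho_{1/k} * \widetilde{f} + 1/k$, where $\widetilde{f}$ denotes the zero extension of $f$ to $\mathbb{R}^n$ and $\varrho_{1/k}$ is a standard mollifier; then $f_k \in C^\infty(\overline{\Omega})$, $1/k \leq f_k \leq \Lambda_k$ for some finite $\Lambda_k$, and $\|f - f_k\|_{L^n(\Omega)} \to 0$ (the additive constant contributes at most $|\Omega|^{1/n}/k$ to the $L^n$ norm). For $g_k$ take any $C^\infty(\overline{\Omega})$ sequence approximating a continuous extension of $g$ uniformly on $\partial\Omega$, so that $\|g - g_k\|_{L^\infty(\partial\Omega)} \to 0$. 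These data fulfil the hypotheses of \Cref{thm:convergence-smooth-data} and \Cref{prop:properties-HJB}(c) with Hölder exponents arbitrarily close to one. Consequently the Alexandrov solution $u^{(k)}$ associated with $(f_k, g_k)$ is the classical solution from \Cref{prop:properties-HJB}(c), and for each fixed $k$ the classical solutions $u^{(k)}_j$ of \eqref{def:HJB} with parameter $\varepsilon_j$ and data $(f_k, g_k)$ converge uniformly to $u^{(k)}$ as $j \to \infty$.

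With these intermediate problems, the triangle inequality
\begin{align*}
\|u - u_j\|_{L^\infty(\Omega)}
\leq \|u - u^{(k)}\|_{L^\infty(\Omega)}
+ \|u^{(k)} - u^{(k)}_j\|_{L^\infty(\Omega)}
+ \|u^{(k)}_j - u_j\|_{L^\infty(\Omega)}
\end{align*}
decouples the $k$ and $j$ regimes. The first term compares two Alexandrov solutions and is bounded via \Cref{rem:stability-Alexandrov-solution} by $\|g - g_k\|_{L^\infty(\partial\Omega)} + C \|f - f_k\|_{L^n(\Omega)}$ for a constant $C$ depending only on $n$ and $\mathrm{diam}(\Omega)$; the third is bounded via \Cref{lem:extended-stability} at $\varepsilon = \varepsilon_j$ by the same expression, and the critical point is that the constant is independent of $\varepsilon_j$. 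Given $\eta > 0$, I first choose $k$ large enough that both boundary contributions fall below $\eta/3$ (a bound that holds uniformly in $j$), and then, invoking \Cref{thm:convergence-smooth-data}(a) with the now-fixed smooth data $(f_k, g_k)$, I pick $j$ large enough that the middle term is also below $\eta/3$.

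The main obstacle is purely technical: producing approximating data that simultaneously meet all hypotheses of \Cref{thm:convergence-smooth-data}, in particular the strict positivity $\lambda_k > 0$ required for the $C^{2,\alpha}_{\mathrm{loc}}$ regularity of $u^{(k)}$ in \Cref{prop:properties-HJB}(c). The additive perturbation by $1/k$ achieves this without spoiling $L^n$ convergence on the bounded domain. Beyond this construction, the substance of the argument is carried entirely by the $\varepsilon$-uniform stability of \Cref{lem:extended-stability}, which is precisely what guarantees that the rough-data perturbation error does not blow up as the regularization parameter vanishes.
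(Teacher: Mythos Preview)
Your proof is correct and follows essentially the same approach as the paper: approximate $(f,g)$ by smooth strictly positive data, invoke \Cref{thm:convergence-smooth-data} for the smooth problem, and use the $\varepsilon$-independent stability of \Cref{lem:extended-stability} together with a triangle inequality to pass back to the rough data. The paper's version is organized around a single tolerance $\delta$ rather than an index $k$, but the ingredients and logic are identical.
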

\begin{proof}
	Recall the constant $c_n$ from \Cref{lem:Alexandrov-maximum} and $C \coloneqq c_n \mathrm{diam}(\Omega)^{(n-1)/n}$.
	Given $\delta > 0$, there exist smooth functions $f_\delta, g_\delta \in C^\infty(\overline{\Omega})$ such that
	\begin{enumerate}[wide]
		\item[(i)] $f_\delta > 0$ in $\overline{\Omega}$ and $\|f-f_\delta\|_{L^n(\Omega)} \leq n\delta/(8C(\mathrm{diam}(\Omega)/2)^{1/n})$ (the approximation $f_\delta$ can be constructed by the convolution of $f$ with a nonnegative mollifier plus an additional small constant),
		\item[(ii)] $\|g - g_\delta\|_{L^\infty(\partial \Omega)} \leq \delta/4$.
	\end{enumerate}
	Notice that the bound $f_\delta > 0$ in $\overline{\Omega}$ and the smoothness of the Dirichlet data $g_\delta \in C^{\infty}(\partial \Omega)$ allow for strict convexity of the Alexandrov solution $u_\delta$ to the Monge--Amp\`ere equation $\det \D^2 u_\delta = (f_\delta/n)^n$ with Dirichlet data $u_\delta = g_\delta$ on $\partial \Omega$ \cite[Corollary 4.11]{Figalli2017}.
	This is a crucial assumption in \Cref{thm:convergence-smooth-data}, which leads to the uniform convergence of the sequence $(u_{\delta,j})_{j \in \mathbb{N}}$ of viscosity solutions $u_{\delta,j} \in C(\overline{\Omega})$ to the HJB equation
	\begin{align*}
		F_{\varepsilon_j}(f_\delta; x, \D^2 u_{\delta,j}) = 0 \text{ a.e.~in } \Omega \quad\text{and}\quad u_{\delta,j} = g_\delta \text{ on } \partial \Omega
	\end{align*}
	towards $u_\delta$ as $j \to \infty$. Therefore, there exists a $j_0 \in \mathbb{N}$ such that $\|u_\delta - u_{\delta,j}\|_{L^\infty(\Omega)} \leq \delta/4$ for all $j \geq j_0$.
	The stability estimate \eqref{ineq:stability-estimate-global} from \Cref{lem:extended-stability} and (i)--(ii) provide
	\begin{align*}
		&\|u - u_\delta\|_{L^\infty(\Omega)} + \|u_j - u_{\delta,j}\|_{L^\infty(\Omega)}\\
		&\qquad\leq 2\|g-g_\delta\|_{L^\infty(\partial \Omega)} + \frac{2C}{n} (\mathrm{diam}(\Omega)/2)^{1/n}\|f - f_\delta\|_{L^n(\Omega)} \leq 3\delta/4.
	\end{align*}
	This, the triangle inequality, and $\|u_\delta - u_{\delta,j}\|_{L^\infty(\Omega)} \leq \delta/4$ verify, for all $j \geq j_0$, that $\|u - u_j\|_{L^\infty(\Omega)} \leq \delta$,
	whence $u_j$ converges uniformly to $u$ as $j \to \infty$.\medskip
\end{proof}

\section{A posteriori error estimate}\label{sec:FEM}

In this section we prove an a~posteriori error bound for a given
approximation $v_h$ to the Alexandrov solution $u$ of the 
Monge--Amp\`ere equation. In what follows we assume
a given finite partition $\mathcal T$ of $\overline\Omega$
of closed polytopes
such that the interiors of any distinct $T,K\in\mathcal T$ are disjoint
and the union over $\mathcal T$ equals $\overline\Omega$.
Let $V_h\subset C^{1,1}(\overline\Omega)$ be a subspace of functions in
$C^{2}(T)$ when restricted to any set 
$T\in\mathcal T$ of the partition.
(Here, $C^2$ up to the boundary of $T$ means that there exists a sufficiently
smooth extension of the function $v_h|_{\mathrm{int}(T)}$ to $T$ for $v_h \in V_h$.)
The piecewise Hessian of any $v_h \in V_h$ is denoted by
$\D_\mathrm{pw}^2v_h$.
In practical examples, we think of $V_h$ as a space of
$C^1$-regular finite element functions.
Given any $v \in C(\Omega)$, its convex envelope is defined
as
\begin{align}
	\Gamma_{v} (x) \coloneqq
	\sup_{\substack{w : \mathbb{R}^n \to \mathbb R
		\text{ affine}\\
		w\leq v}} w (x)
	\quad\text{for any }x\in \Omega.
\end{align}
Let $\mathcal C_v \coloneqq \{x\in \Omega: v (x) = \Gamma_{v}(x)\}$ denote the contact set of $v$.
\begin{theorem}[guaranteed error control for Monge--Amp\`ere]
	\label{thm:GUB-MA}
	Given a nonnegative right-hand side $f \in L^n(\Omega)$ and $g \in C(\partial\Omega)$,
	let $u \in C(\overline{\Omega})$ be the Alexandrov solution to \eqref{def:Monge-Ampere}.
	Let $v_h\in V_h$ with its convex envelope $\Gamma_{v_h}$ be given
	and define $f_h \coloneqq \chi_{\mathcal{C}_{v_h}} n  (\det \D^2_\mathrm{pw} v_h)^{1/n}$.
	For any convex subset $\Omega' \subset \Omega$, we have
	\begin{align}
		\|u - \Gamma_{v_h}\|_{L^\infty(\Omega)} 
		\leq \limsup_{x \to \partial \Omega} |(g - \Gamma_{v_h})(x)|
		+ \frac{c_n}{2^{1/n}n}\mathrm{diam}(\Omega')\|f - f_h\|_{L^n(\Omega')}&\nonumber\\
		 + \frac{c_n}{n}\mathrm{diam}(\Omega)^{(n-1)/n} \max_{x\in\overline{\Omega\setminus\Omega'}}
		\operatorname{dist}(x,\partial\Omega)^{1/n}
		\|f - f_h\|_{L^n(\Omega)}\eqqcolon \mathrm{RHS}_0&.
		\label{ineq:GUB-MA}
	\end{align}
\end{theorem}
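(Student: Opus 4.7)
The plan is to identify the convex envelope $\Gamma_{v_h}$ as an Alexandrov solution of a Monge--Amp\`ere equation on $\Omega$, and then to apply the $L^\infty$ stability estimate \eqref{ineq:stability-estimate} of \Cref{lem:extended-stability} (combined with \Cref{rem:stability-Alexandrov-solution}) twice: once on all of $\Omega$ with $\omega \coloneqq \Omega\setminus\Omega'$, and once on the convex subdomain $\Omega'$ with $\omega \coloneqq \Omega'$. The first application will reproduce the third line of $\mathrm{RHS}_0$, while the second will reproduce the second line.

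First, I would verify that $\Gamma_{v_h}$ is the Alexandrov solution on $\Omega$ of $\det\D^2 w = (f_h/n)^n$ with its own boundary trace on $\partial\Omega$. The envelope $\Gamma_{v_h}$ is convex, and off the contact set $\mathcal{C}_{v_h}$ it is affine on each connected component, so it carries no Monge--Amp\`ere mass there. On $\mathcal{C}_{v_h}$, the classical area formula for convex envelopes of piecewise $C^2$ functions (available since $v_h\in C^{1,1}(\overline\Omega)$ is $C^2$ on each $T\in\mathcal T$) identifies the Monge--Amp\`ere density of $\Gamma_{v_h}$ with $\det\D^2_{\mathrm{pw}} v_h = (f_h/n)^n$. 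By \Cref{lem:generalized-viscosity-solution}, $\Gamma_{v_h}$ is then the generalized viscosity solution with $\varepsilon = 0$ and right-hand side $f_h\in L^n(\Omega)$, so that \Cref{lem:extended-stability} is applicable to the pair $(u,\Gamma_{v_h})$.

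Applying this stability on $\Omega$ with $\omega = \Omega\setminus\Omega'$ yields a bound on $\|u-\Gamma_{v_h}\|_{L^\infty(\Omega\setminus\Omega')}$ in which the boundary term $\|u-\Gamma_{v_h}\|_{L^\infty(\partial\Omega)}$ is reinterpreted (using $u = g$ on $\partial\Omega$ and the limit of $\Gamma_{v_h}$ from inside) as $\limsup_{x\to\partial\Omega}|(g-\Gamma_{v_h})(x)|$, producing exactly the $\limsup$ term and the third line of $\mathrm{RHS}_0$. For the bound on $\Omega'$, I would exploit that, at each interior point of a convex domain, the subdifferential of a convex function is independent of the ambient convex reference domain; consequently, $u|_{\Omega'}$ and $\Gamma_{v_h}|_{\Omega'}$ are Alexandrov solutions on $\Omega'$ to the Monge--Amp\`ere equations with right-hand sides $(f/n)^n$ and $(f_h/n)^n$ and boundary data on $\partial\Omega'$. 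Applying \eqref{ineq:stability-estimate} on $\Omega'$ with constant $c_n\mathrm{diam}(\Omega')^{(n-1)/n}$ and the inradius-versus-diameter bound $\max_{x\in\overline{\Omega'}}\mathrm{dist}(x,\partial\Omega')\leq \mathrm{diam}(\Omega')/2$ (valid for any bounded convex set, since any inscribed ball has diameter at most $\mathrm{diam}(\Omega')$) gives
\[
\|u-\Gamma_{v_h}\|_{L^\infty(\Omega')} \leq \|u-\Gamma_{v_h}\|_{L^\infty(\partial\Omega')} + \tfrac{c_n}{2^{1/n}n}\mathrm{diam}(\Omega')\|f-f_h\|_{L^n(\Omega')},
\]
where the prefactor arises from $\mathrm{diam}(\Omega')^{(n-1)/n}\cdot(\mathrm{diam}(\Omega')/2)^{1/n} = \mathrm{diam}(\Omega')/2^{1/n}$.

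The assembly uses the topological inclusion $\partial\Omega' \subset \overline{\Omega\setminus\Omega'}\cup\partial\Omega$, so that $\|u-\Gamma_{v_h}\|_{L^\infty(\partial\Omega')}$ is controlled by the first estimate on $\overline{\Omega\setminus\Omega'}$ together with the $\limsup$ boundary term; since $\|u-\Gamma_{v_h}\|_{L^\infty(\Omega)}$ equals the maximum of the norms on $\Omega'$ and on $\Omega\setminus\Omega'$, adding the two localized bounds (and absorbing the duplicated boundary contribution) produces precisely $\mathrm{RHS}_0$. I expect the main obstacle to be the identification of $\Gamma_{v_h}$ as an Alexandrov solution with density $\chi_{\mathcal{C}_{v_h}}\det\D^2_{\mathrm{pw}} v_h$ (the area formula for convex envelopes of piecewise $C^2$ functions), together with the careful treatment of the boundary trace of $\Gamma_{v_h}$, which in general need not be continuous up to $\partial\Omega$ and is precisely what motivates the $\limsup$ in the statement.
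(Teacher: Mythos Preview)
Your proposal is correct and follows essentially the same approach as the paper: identify $\Gamma_{v_h}$ as an Alexandrov solution with density $(f_h/n)^n$ (this is \Cref{l:MA4conv} in the paper), then apply the extended stability of \Cref{lem:extended-stability} once on $\Omega'$ (yielding the $\mathrm{diam}(\Omega')$ term via \eqref{ineq:stability-estimate-global}) and once on $\Omega$ with $\omega=\Omega\setminus\Omega'$ (yielding the $\mathrm{dist}(\cdot,\partial\Omega)^{1/n}$ term), and combine via $\|u-\Gamma_{v_h}\|_{L^\infty(\partial\Omega')}\leq\|u-\Gamma_{v_h}\|_{L^\infty(\Omega\setminus\Omega')}$. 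One small caveat: your claim that $\Gamma_{v_h}$ is ``affine on each connected component'' of $\Omega\setminus\mathcal C_{v_h}$ is not quite accurate (it is only affine along segments joining contact points), but you rightly flag this step as the main obstacle, and the paper handles it by citing that $\partial\Gamma_{v_h}(\Omega\setminus\mathcal C_{v_h})$ has Lebesgue measure zero together with the area formula.
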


The proof of \Cref{thm:GUB-MA} requires the following result on
the Monge--Amp\`ere measure of the convex envelope $\Gamma_{v_h}$.
\begin{lemma}[MA measure of the convex envelope] \label{l:MA4conv}
 The convex envelope $\Gamma_{v_h}$ of any $v_h\in V_h$
  satisfies $\det \D^2 \Gamma_{v_h} = \widetilde{f}_h \d x$ in the sense of 
  Monge--Amp\`ere measure with the nonnegative function
  $  \widetilde f_h \coloneqq \chi_{\mathcal{C}_{v_h}}\det \D_\mathrm{pw}^2 v_h 
  \in L^\infty(\Omega)$.
\end{lemma}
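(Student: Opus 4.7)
The strategy is to exploit the $C^{1,1}$ regularity of $\Gamma_{v_h}$ so that its Monge--Amp\`ere measure reduces to the almost-everywhere-defined pointwise Hessian determinant, and then to identify this determinant on and off the contact set. Since $v_h\in C^{1,1}(\overline\Omega)$ and $\Omega$ is convex, a classical result for convex envelopes of $C^{1,1}$ data on a convex set implies $\Gamma_{v_h}\in C^{1,1}_{\mathrm{loc}}(\Omega)$. By Alexandrov's theorem the pointwise Hessian $\D^2\Gamma_{v_h}$ then exists almost everywhere in $\Omega$, and for convex functions with Lipschitz gradient the associated Monge--Amp\`ere measure is absolutely continuous with density $\det\D^2\Gamma_{v_h}$. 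It therefore suffices to prove $\det\D^2\Gamma_{v_h}=\widetilde f_h$ almost everywhere.

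On the open complement $\Omega\setminus\mathcal C_{v_h}$, I would invoke a standard structural property of convex envelopes: the value $\Gamma_{v_h}(x)$ at any non-contact point $x$ is attained by a Carath\'eodory representation as a nontrivial convex combination of values $v_h(x_j)$ with $x_j\in\mathcal C_{v_h}$. Consequently $\Gamma_{v_h}$ is affine along a nondegenerate line segment through $x$, so at every twice-differentiability point in $\Omega\setminus\mathcal C_{v_h}$ the Hessian $\D^2\Gamma_{v_h}$ admits a zero eigenvalue and $\det\D^2\Gamma_{v_h}=0=\widetilde f_h$ almost everywhere on this set.

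On the contact set I would argue via Lebesgue density. Fix $x_0\in\mathcal C_{v_h}\cap\mathrm{int}(T)$ for some $T\in\mathcal T$ that is simultaneously a Lebesgue density point of $\mathcal C_{v_h}$ and a twice-differentiability point of $\Gamma_{v_h}$; such $x_0$ form a set of full measure in $\mathcal C_{v_h}$ because $\mathcal T$ is finite and $v_h\in C^2(T)$. From $\Gamma_{v_h}\leq v_h$ with equality at $x_0$ together with differentiability of both sides, $\D\Gamma_{v_h}(x_0)=\D v_h(x_0)$. Taylor expanding $v_h$ and $\Gamma_{v_h}$ at $x_0$ and evaluating the identity $\Gamma_{v_h}=v_h$ along sequences $x_k\to x_0$ with $x_k\in\mathcal C_{v_h}$ (available in directions that span $\mathbb R^n$ by the density-point property) yields $(\D^2 v_h(x_0)-\D^2\Gamma_{v_h}(x_0))h\cdot h=0$ on a spanning set of $h$, and symmetry forces $\D^2\Gamma_{v_h}(x_0)=\D^2_{\mathrm{pw}}v_h(x_0)$. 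Combining the two cases produces $\det\D^2\Gamma_{v_h}=\chi_{\mathcal C_{v_h}}\det\D^2_{\mathrm{pw}}v_h=\widetilde f_h$ almost everywhere, and the bound $\widetilde f_h\in L^\infty(\Omega)$ is automatic from $v_h\in C^2(T)$ up to the boundary on each of the finitely many pieces of $\mathcal T$. The main obstacle is the density-point Taylor comparison, which hinges on the elementary fact that a set of full Lebesgue density at a point admits approach sequences in a direction cone spanning the ambient space.
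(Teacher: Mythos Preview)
Your approach is correct but takes a genuinely different route from the paper. You proceed pointwise: first invoke the $C^{1,1}$ regularity of convex envelopes of $C^{1,1}$ data (a nontrivial external result, e.g.\ Kirchheim--Kristensen) so that the Monge--Amp\`ere measure has density $\det\D^2\Gamma_{v_h}$, and then identify this density case by case on and off the contact set via a density-point Taylor comparison. The paper instead works measure-theoretically with the subdifferential image: it shows $\partial\Gamma_{v_h}(x)=\{\nabla v_h(x)\}$ on $\mathcal C_{v_h}$, quotes that $\partial\Gamma_{v_h}(\Omega\setminus\mathcal C_{v_h})$ has Lebesgue measure zero (De~Philippis--Figalli), and then applies the area formula to the Lipschitz map $\nabla v_h$ restricted to $\mathcal C_{v_h}$. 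The paper's route never invokes $C^{1,1}$ regularity of $\Gamma_{v_h}$ and never identifies $\D^2\Gamma_{v_h}$ pointwise; your route is more explicit about the Hessian but imports a heavier regularity statement as a black box.

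One imprecision to tighten: knowing $(\D^2 v_h(x_0)-\D^2\Gamma_{v_h}(x_0))h\cdot h=0$ merely on a \emph{spanning set} of directions $h$ does not force the symmetric matrix to vanish. You need either the sign information $\D^2 v_h(x_0)\ge\D^2\Gamma_{v_h}(x_0)$ (immediate from $v_h-\Gamma_{v_h}\ge 0$ having a minimum with second-order expansion at $x_0$), after which a spanning set of null directions for a positive semidefinite form does kill it, or the stronger fact that at a Lebesgue density point every open cone contains contact points arbitrarily close to $x_0$, so the quadratic-form identity actually holds for \emph{all} $h\in S^{n-1}$ and polarization finishes.
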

\begin{proof}
	We first claim that $\partial \Gamma_{v_h}(x) = \partial v_h(x) = \{\nabla v_h(x)\}$ holds for all $x \in \Omega \cap \mathcal{C}_{v_h}$. In fact, if $p \in \partial \Gamma_{v_h}(x)$, then $\ell_{x,p}(z) \coloneqq \Gamma_{v_h}(x) + p \cdot (z - x)$ is a supporting hyperplane touching $\Gamma_{v_h}$ from below at $x$.
	By design of the convex envelope $\Gamma_{v_h}$, $\ell_{x,p} \leq v_h$.
	Since $\ell_{x,p}(x) = v_h(x)$ because $x \in \Omega \cap \mathcal{C}_{v_h}$, $\ell_{x,p}$ touches $v_h$ at $x$ from below.
	We deduce $p = \nabla v_h(x)$ from the differentiability of $v_h$. The claim then follows from the fact that the subdifferential $\partial \Gamma_{v_h}$ is nonempty in $\Omega$ \cite[Theorem 23.4]{Rockafellar1970}.
	The set $\partial \Gamma_{v_h}(\Omega \setminus \mathcal{C}_{v_h})$ has 
	Lebesgue measure zero \cite[p.~995]{DePhilippisFigalli2013} and $\partial \Gamma_{v_h}(x) = \partial v_h(x) = \{\nabla v_h(x)\}$ holds for all $x \in \Omega \cap \mathcal{C}_{v_h}$.
	Therefore, the area formula \cite[Theorem A.31]{Figalli2017} implies, for any Borel set $\omega \subset \Omega$, that
	\begin{align*}
		%\label{eq:measure-convex-envelope}
		\mu_{\Gamma_{v_h}}(\omega) 
		= \mathcal L^n(\partial \Gamma_{v_h}(\omega))
				%            = \mathcal L^n(\partial \Gamma_{v_h}(\omega \cap \mathcal{C}))
		= \mathcal L^n(\nabla v_h(\omega \cap \mathcal{C}_{v_h}))
		= \int_{\omega \cap \mathcal{C}_{v_h}} \det \D^2_\mathrm{pw} v_h \d x.
	\end{align*}
	This formula implies that $\chi_{\mathcal{C}_{v_h}} \det \D^2_\mathrm{pw} v_h \geq 0$ is a nonnegative function a.e.~in $\Omega$.
	Consequently,
	$\mu_{\Gamma_{v_h}} = \widetilde f_h \d x$ with 
	$\tilde f_h \coloneqq \chi_{\mathcal{C}_{v_h}} \det \D^2_\mathrm{pw} v_h \geq 0$.
\end{proof}
\begin{proof}[Proof of \Cref{thm:GUB-MA}]
	\Cref{l:MA4conv} proves that the Monge--Amp\`ere measure $\mu_{\Gamma_{v_h}} = (f_h/n)^n \d{x}$ of $\Gamma_{v_h}$ can be expressed by the $L^1$ density function $(f_h/n)^n$.
	In particular, $\Gamma_{v_h}$ is the generalized viscosity solution to $F_0(f_h;x,\D^2 \Gamma_{v_h}) = 0$ in $\Omega$.
	The application of the stability estimate \eqref{ineq:stability-estimate-global} from \Cref{lem:extended-stability} on the convex subset $\Omega' \subset \Omega$ instead of $\Omega$ leads to
	\begin{align*}
		\|u - \Gamma_{v_h}\|_{L^\infty(\Omega')} 
		\leq \|u - \Gamma_{v_h}\|_{L^\infty(\partial \Omega')} 
		     + \frac{c_n}{2^{1/n} n}\mathrm{diam}(\Omega')
		        \|f - f_h\|_{L^n(\Omega')}.
	\end{align*}
	The unknown error $\|u - \Gamma_{v_h}\|_{L^\infty(\partial \Omega')} \leq \|u - \Gamma_{v_h}\|_{L^\infty(\Omega \setminus \Omega')}$ can be bounded by the local estimate \eqref{ineq:stability-estimate} from \Cref{lem:extended-stability} with $\omega \coloneqq \Omega \setminus \Omega'$. If $\Gamma_{v_h} \in C(\overline{\Omega})$ is continuous up to the boundary $\partial \Omega$ of $\Omega$, this reads
	\begin{align*}
		&\|u - \Gamma_{v_h}\|_{L^\infty(\Omega \setminus \Omega')} \leq \|g - \Gamma_{v_h}\|_{L^\infty(\partial \Omega)}\\
		&\qquad +
                   \frac{c_n}{n} \mathrm{diam}(\Omega)^{(n-1)/n} \max_{x\in\overline{\Omega\setminus\Omega'}}
                           \operatorname{dist}(x,\partial\Omega)^{1/n}
		          \|f - f_h\|_{L^n(\Omega)}.
	\end{align*}
	Since $\Gamma_{v_h}$ may only be continuous in the domain $\Omega$, $\|g - \Gamma_{v_h}\|_{L^\infty(\partial \Omega)}$ is replaced by $\limsup_{x \to \partial \Omega} |(g - \Gamma_{v_h})(x)|$ in general. The combination of the two previously displayed formula concludes the proof.
\end{proof}
We note that, for certain examples, the convex envelope $\Gamma_{v_h}$ of an approximation $v_h$ is continuous up to the boundary.
\begin{proposition}[continuity at boundary]\label{prop:continuity-convex-envelope}
	Let $v \in C^{0,1}(\overline{\Omega})$ be Lipschitz
	continuous such that $v|_{\partial \Omega}$ can be extended to a Lipschitz-continuous convex function $g \in C^{0,1}(\overline{\Omega})$.
	Then $\Gamma_{v} \in C(\overline{\Omega})$ and $\Gamma_{v} = v$ on $\partial \Omega$.
\end{proposition}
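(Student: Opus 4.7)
The plan is to reduce continuity of $\Gamma_v$ up to $\partial\Omega$ together with $\Gamma_v|_{\partial\Omega} = v|_{\partial\Omega}$ to the construction, at each boundary point $x_0\in\partial\Omega$, of a single affine minorant $\ell$ of $v$ on $\Omega$ with $\ell(x_0)=v(x_0)$. Since $\Gamma_v$ is a pointwise supremum of affine functions it is convex and hence continuous on the open set $\Omega$, and by definition $\Gamma_v\le v$. So once every boundary point admits such a tight affine lower barrier, $\Gamma_v$ is squeezed between these barriers and $v$, and continuity of $v$ forces $\lim_{x\to x_0}\Gamma_v(x)=v(x_0)$.

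To construct $\ell$ at $x_0$, I combine a subgradient of $g$ with a large multiple of an outer normal to $\Omega$. Because $g$ is convex and Lipschitz on $\overline\Omega$, a limit of subgradients at interior points $x_n\to x_0$ (which are uniformly bounded by $\mathrm{Lip}(g)$) yields a vector $p_0$ with $\ell_0(x):=v(x_0)+p_0\cdot(x-x_0)\le g(x)$ on $\overline\Omega$ and $\ell_0(x_0)=v(x_0)$. Projecting any $x\in\Omega$ onto a closest boundary point $y$ and using $g(y)=v(y)$ with the two Lipschitz bounds gives the quantitative comparison
\begin{align*}
   g(x)-v(x) \le \bigl(\mathrm{Lip}(v)+\mathrm{Lip}(g)\bigr)\,\mathrm{dist}(x,\partial\Omega).
\end{align*}
Finally, the supporting hyperplane theorem furnishes a unit outer normal $\nu_0$ to $\Omega$ at $x_0$, so $\nu_0\cdot(x-x_0)\le 0$ on $\overline\Omega$, and convexity of $\Omega$ yields the elementary geometric inequality $\mathrm{dist}(x,\partial\Omega)\le |\nu_0\cdot(x-x_0)|$ on $\Omega$: the ray from $x$ in direction $\nu_0$ exits $\overline\Omega$ no later than reaching the supporting hyperplane at $x_0$.

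Setting $\ell_\alpha(x):=\ell_0(x)+\alpha\,\nu_0\cdot(x-x_0)$ with $\alpha\ge\mathrm{Lip}(v)+\mathrm{Lip}(g)$, the three previous ingredients combine to
\begin{align*}
   \ell_\alpha(x)-v(x) \le \bigl(g(x)-v(x)\bigr)+\alpha\,\nu_0\cdot(x-x_0) \le \bigl(\mathrm{Lip}(v)+\mathrm{Lip}(g)-\alpha\bigr)\,|\nu_0\cdot(x-x_0)| \le 0
\end{align*}
on $\Omega$, while $\ell_\alpha(x_0)=v(x_0)$. Hence $\ell_\alpha$ is an admissible competitor in the definition of $\Gamma_v$, giving $\Gamma_v(x)\ge\ell_\alpha(x)\to v(x_0)$ as $x\to x_0$; combined with $\Gamma_v\le v$ and continuity of $v$, this yields $\Gamma_v\in C(\overline\Omega)$ and $\Gamma_v=v$ on $\partial\Omega$.

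The main delicate point is the existence of a finite subgradient of $g$ at the \emph{boundary} point $x_0$, which is precisely where the Lipschitz hypothesis on $g$ is indispensable: it makes the interior subdifferentials uniformly bounded and therefore precompact, and guarantees that any cluster point of subgradients along $x_n\to x_0$ remains a subgradient at $x_0$. All other steps—the Lipschitz estimate for $g-v$, existence of the outer normal, and the distance inequality—are standard consequences of convexity of $\Omega$ and Lipschitz continuity of the data.
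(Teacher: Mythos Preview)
Your argument is correct and takes a genuinely different route from the paper. The paper first reduces to the homogeneous case $v|_{\partial\Omega}=0$ by subtracting $g$ and considering $\Gamma_{v-g}$; in that case a single affine barrier built from the inward normal at the \emph{nearest boundary point to an interior point} $x$ yields the uniform estimate $|\Gamma_v(x)|\le L\,\mathrm{dist}(x,\partial\Omega)$, and the general case follows by sandwiching $\Gamma_v$ between $g+\Gamma_{v-g}$ and $v$. You instead work directly at each boundary point $x_0$ and manufacture a tight affine minorant of $v$ itself by tilting a support plane of $g$ along an outer normal; the key quantitative input is the pair of inequalities $g-v\le(\mathrm{Lip}(v)+\mathrm{Lip}(g))\,\mathrm{dist}(\cdot,\partial\Omega)$ and $\mathrm{dist}(\cdot,\partial\Omega)\le|\nu_0\cdot(\cdot-x_0)|$. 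The paper's reduction is shorter and invokes a classical barrier (Guti\'errez), while your construction is more self-contained and makes transparent exactly where each hypothesis enters: convexity of $\Omega$ for the normal and the distance inequality, convexity of $g$ for the support plane, and Lipschitz continuity of $g$ to ensure a bounded subgradient survives the passage to the boundary. Both approaches ultimately rely on the same squeeze $\ell\le\Gamma_v\le v$ with $\ell(x_0)=v(x_0)$.
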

\begin{proof}
	We first prove the assertion for homogenous boundary condition $v|_{\partial \Omega} = 0$.
	Given any point $x \in \Omega$, let $x' \in \partial \Omega$ denote a best approximation of $x$ onto the boundary $\partial \Omega$ so that $|x - x'| = \mathrm{dist}(x,\partial \Omega)$.
	Define the affine function $a_x(z) \coloneqq L(z - x') \cdot (x' - x)/|x - x'|$ for $z \in \Omega$, where $L$ denotes the Lipschitz constant of the function $v \in C^{0,1}(\overline{\Omega})$.
	It is straight-forward to verify that $a_x \leq v$ in $\overline{\Omega}$ \cite[p.~12]{Gutierrez2016}.
	Therefore, $-L\mathrm{dist}(x,\partial \Omega) = a_x(x) \leq \Gamma_{v}(x) \leq 0$ by definition of the convex envelope.
	This shows $\Gamma_{v} \in C(\overline{\Omega})$ with $\Gamma_{v} \equiv 0$ on $\partial \Omega$.
	In the general case, we observe that $v - g \in C^{0,1}(\overline{\Omega})$ is Lipschitz continuous.
	The first case proves $\Gamma_{v - g} \in C(\overline{\Omega})$ with $\Gamma_{v - g} = v - g$ on $\partial \Omega$.
	We deduce that $w \coloneqq g + \Gamma_{v - g} \in C(\overline{\Omega})$ is a convex function with $w \leq v$ in $\Omega$ and $w = v$ on $\partial \Omega$.
	Let $(x_j)_j \subset \Omega$ be a sequence of points converging to some  point $x \in \partial \Omega$ on the boundary.
	For a given $\gamma > 0$, there exists,
	from the uniform continuity of $v - w$ in the compact set $\overline{\Omega}$, a $\delta > 0$ such that $|(v - w)(x_j) - (v - w)(x)| \leq \gamma$ whenever $|x - x_j| \leq \delta$.
	Since $w \leq \Gamma_{v} \leq v$ in $\Omega$, this implies $|(v - \Gamma_{v})(x_j)| \leq \gamma$ for sufficiently large $j$.
	In combination with the triangle inequality and the Lipschitz continuity of $v$, we conclude $|v(x) - \Gamma_{v}(x_j)| \leq \gamma + |v(x) - v(x_j)| \leq \gamma + L|x - x_j|$.
	Therefore, $\lim_{j \to \infty} \Gamma_{v}(x_j) = v(x)$.
\end{proof}

The theory of this paper also allows for an a~posteriori error control for the regularized HJB equation \eqref{def:HJB}.
We state this for the sake of completeness as, in general, it is difficult to quantify the regularization error $\|u - u_\varepsilon\|_{L^\infty(\Omega)}$.
\begin{theorem}[guaranteed $L^\infty$ error control for uniform elliptic HJB]\label{thm:GUB-HJB}
	Given a positive parameter $0 < \varepsilon \leq 1/n$ and a $C^1$ conforming finite element function $v_h \in V_h$, there exists a unique $f_h \in L^\infty(\Omega)$ such that
	\begin{align}\label{def:HJB-rhs-f-eps}
		F_\varepsilon(f_h; x, \D^2 v_h) = 0 \text{ a.e.~in } \Omega.
	\end{align}
	The viscosity solution $u_\varepsilon$ to \eqref{def:HJB} with right-hand side 
	$f \in L^n(\Omega)$ and Dirichlet data $g \in C(\partial \Omega)$ satisfies, for any convex subset $\Omega' \Subset \Omega$, that
	\begin{align}
		\|u_\varepsilon - v_h\|_{L^\infty(\Omega)} \leq \|g - v_h\|_{L^\infty(\partial \Omega)} 
		+ \frac{c_n}{2^{1/n}n}\mathrm{diam}(\Omega')\|f - f_h\|_{L^n(\Omega')}&
		\nonumber\\
		+ \frac{c_n}{n}\mathrm{diam}(\Omega)^{(n-1)/n} \max_{x\in\overline{\Omega\setminus\Omega'}}
		\operatorname{dist}(x,\partial\Omega)^{1/n}
		\|f - f_h\|_{L^n(\Omega)} \eqqcolon \mathrm{RHS}_{\varepsilon}.&
		\label{ineq:GUB-HJB}
	\end{align}
\end{theorem}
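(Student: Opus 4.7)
The plan is to mimic the proof of \Cref{thm:GUB-MA}, with the role of $\Gamma_{v_h}$ as generalized viscosity solution played this time by the finite element function $v_h$ itself.

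First I would construct $f_h$ pointwise. For a.e.\ $x \in \Omega$ the matrix $\D_\mathrm{pw}^2 v_h(x)$ is well-defined because $V_h\subset C^{1,1}(\overline\Omega)$ is piecewise $C^2$ over the finite partition $\mathcal T$, hence $\D_\mathrm{pw}^2 v_h\in L^\infty(\Omega)$. For such a point $x$, \Cref{lem:computation-stability-RHS} provides a unique $\xi(\D_\mathrm{pw}^2 v_h(x)) \in \mathbb R$ with $F_\varepsilon(\xi(\D_\mathrm{pw}^2 v_h(x));x,\D_\mathrm{pw}^2 v_h(x))=0$, so $f_h(x)\coloneqq \xi(\D_\mathrm{pw}^2 v_h(x))$ is the only choice making \eqref{def:HJB-rhs-f-eps} hold a.e. The stability assertion in \Cref{lem:computation-stability-RHS} yields $|f_h(x)|\leq |f_h(x)-f_h(0)|+|\xi(0)|\leq C(\varepsilon)|\D_\mathrm{pw}^2v_h(x)|+|\xi(0)|$ for a.e.\ $x$, hence $f_h\in L^\infty(\Omega)$.

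Next I would identify $v_h$ as a generalized viscosity solution. Since $V_h\subset C^{1,1}(\overline\Omega)$ is piecewise $C^2$ on a finite partition, $v_h\in W^{2,\infty}(\Omega)\subset W^{2,n}(\Omega)$, and by construction $F_\varepsilon(f_h;x,\D^2 v_h)=0$ a.e.\ in $\Omega$, so $v_h$ is a strong solution to this HJB equation with boundary data $v_h|_{\partial\Omega}\in C(\partial\Omega)$. By \Cref{thm:strongimpliesviscosity}, $v_h$ is the unique generalized viscosity solution of \eqref{def:HJB} with right-hand side $f_h\in L^n(\Omega)$ and Dirichlet data $v_h|_{\partial\Omega}$.

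With this identification in hand, I would apply \Cref{lem:extended-stability} to the pair of generalized viscosity solutions $u_\varepsilon$ and $v_h$, exactly as in the proof of \Cref{thm:GUB-MA}. First the local bound \eqref{ineq:stability-estimate} on the convex subset $\Omega'$ gives
\begin{align*}
\|u_\varepsilon-v_h\|_{L^\infty(\Omega')}
\leq \|u_\varepsilon-v_h\|_{L^\infty(\partial\Omega')}
+\frac{c_n}{2^{1/n}n}\mathrm{diam}(\Omega')\|f-f_h\|_{L^n(\Omega')},
\end{align*}
and then the same estimate applied with $\omega\coloneqq\Omega\setminus\Omega'$ controls the unknown boundary term on $\partial\Omega'$ by
\begin{align*}
\|u_\varepsilon-v_h\|_{L^\infty(\Omega\setminus\Omega')}
\leq \|g-v_h\|_{L^\infty(\partial\Omega)}
+\frac{c_n}{n}\mathrm{diam}(\Omega)^{(n-1)/n}\!\!\max_{x\in\overline{\Omega\setminus\Omega'}}\!\mathrm{dist}(x,\partial\Omega)^{1/n}\|f-f_h\|_{L^n(\Omega)}.
\end{align*}
Here, unlike in \Cref{thm:GUB-MA}, the boundary trace $v_h|_{\partial\Omega}$ is genuinely continuous (since $v_h\in C^{1,1}(\overline\Omega)$), so no $\limsup$ is required. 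Combining the two estimates and using $\|u_\varepsilon-v_h\|_{L^\infty(\Omega)}=\max\{\|u_\varepsilon-v_h\|_{L^\infty(\Omega')},\|u_\varepsilon-v_h\|_{L^\infty(\Omega\setminus\Omega')}\}$ yields \eqref{ineq:GUB-HJB}.

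The only non-routine step is the verification that $v_h$ qualifies as a generalized viscosity solution; once the piecewise $C^2$ regularity is converted into the $W^{2,n}$ hypothesis needed by \Cref{thm:strongimpliesviscosity}, the remainder is a clean copy of the argument used for \Cref{thm:GUB-MA}. I do not anticipate serious technical difficulty, and the constants appearing in \eqref{ineq:GUB-HJB} are the same ones tracked through \Cref{lem:stability-regularized-PDE-L-infty}.
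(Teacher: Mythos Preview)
Your proposal is correct and follows essentially the same route as the paper: construct $f_h=\xi(\D_\mathrm{pw}^2 v_h)$ via \Cref{lem:computation-stability-RHS}, invoke \Cref{thm:strongimpliesviscosity} to recognise $v_h$ as a generalized viscosity solution, and then apply the two stability estimates of \Cref{lem:extended-stability} on $\Omega'$ and on $\Omega\setminus\Omega'$ exactly as in the proof of \Cref{thm:GUB-MA}. Your additional remarks (the $W^{2,\infty}\subset W^{2,n}$ embedding and the explicit bound showing $f_h\in L^\infty$) are welcome elaborations but do not change the argument.
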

\begin{proof}
	As in the proof of \Cref{thm:strongimpliesviscosity}, \Cref{lem:computation-stability-RHS} provides a (unique) piecewise continuous and essentially bounded function $f_h \coloneqq \xi(\D^2_{\mathrm{pw}} v_h) \in L^\infty(\Omega)$ with
	\eqref{def:HJB-rhs-f-eps}.
	\Cref{thm:strongimpliesviscosity} shows that $v_h$ is the generalized viscosity solution to \eqref{def:HJB-rhs-f-eps}. Therefore,
	the stability estimates from \Cref{lem:extended-stability} can be applied to $u_\varepsilon$ and $v_h$. First, the application of \eqref{ineq:stability-estimate-global} to the subdomain $\Omega'$ instead $\Omega$ leads to
	\begin{align*}
		\|u_\varepsilon - v_h\|_{L^\infty(\Omega')} \leq \|u_\varepsilon - v_h\|_{L^\infty(\partial \Omega')} + \frac{c_n}{2^{1/n}n}\mathrm{diam}(\Omega')\|f - f_h\|_{L^n(\Omega')}.
	\end{align*}
	Second, the local estimate \eqref{ineq:stability-estimate} with $\omega \coloneqq \Omega \setminus \Omega'$ implies
	\begin{align*}
		&\|u_\varepsilon - v_h\|_{L^\infty(\Omega \setminus \Omega')} \leq \|g - v_h\|_{L^\infty(\partial \Omega)}\\
		&\qquad +
		\frac{c_n}{n}\mathrm{diam}(\Omega)^{(n-1)/n} \max_{x\in\overline{\Omega\setminus\Omega'}}
		\operatorname{dist}(x,\partial\Omega)^{1/n}
		\|f - f_h\|_{L^n(\Omega)}.
	\end{align*}
	Since $\|u_\varepsilon - v_h\|_{L^\infty(\partial \Omega')} \leq \|u_\varepsilon - v_h\|_{L^\infty(\Omega \setminus \Omega')}$, the combination of the two previously displayed formulas concludes the proof.
\end{proof}
We point out that in both theorems of this section, it is possible to apply the stability estimate \eqref{ineq:stability-estimate} to further subsets of $\Omega$ to localize the error estimator.

\section{Numerical examples}\label{sec:num}

In this section, we apply the theory from \Cref{sec:FEM} to numerical benchmarks on the (two-dimensional) unit
square domain $\Omega \coloneqq (0,1)^2$.

\subsection{Implementation}
Some remarks on the practical realization precede the numerical benchmarks of this section.

\subsubsection{Setup}
Given $\mathcal T$ as a rectangular partition of
the domain $\Omega$ with the set $\mathcal{E}$ of edges, we choose $V_h$ to be the
Bogner--Fox--Schmit finite element space \cite{Ciarlet1978}.
It is the space of global $C^{1,1}(\overline\Omega)$ functions
that are bicubic when restricted to any element $T\in\mathcal T$.
We compute the discrete approximation in $V_h$
by approximating the regularized problem \eqref{def:F-eps}
with a Galerkin method.
In the two-dimensional setting, this yields
a strongly monotone problem with a unique discrete solution
$u_{h,\varepsilon}$ \cite{gt2021}.
Since $v_h \coloneqq u_{h,\varepsilon}$ is a $C^{1,1}(\overline{\Omega})$ function, we can apply \Cref{thm:GUB-MA} to obtain error bounds for $\|u - \Gamma_{v_h}\|_{L^\infty(\Omega)}$, which motivates an adaptive scheme as outlined below.

\subsubsection{Evaluation of the upper bound of Theorem~\ref{thm:GUB-MA}}
We proceed as follows for the computation of the right-hand side $\mathrm{RHS}_0$ of \eqref{ineq:GUB-MA}.\medskip

\emph{Integration of $f - f_h$ for $f_h \coloneqq 2 \chi_{\mathcal{C}_{v_h}} (\det \D^2_\mathrm{pw} v_h)^{1/2}$.}
The integral $\|f - f_h\|_{L^2(\omega)}$ for any subset $\omega \subset \Omega$ is computed via numerical integration.
Given a set of Gauss points $\mathcal{N}_\ell$ associated to the degree of exact integration $\ell$, this reads
\begin{align}\label{eq:numerical-integration-RHS}
	\sum_{T \in \mathcal{T}} \sum_{x \in \mathcal{N}_\ell \cap T \cap \omega} \operatorname{meas}(T) w_{\ell,T}(x)(f(x) - 2\chi_{\mathcal{C}_{v_h}}(x) (\det \D^2_\mathrm{pw} v_h(x))^{1/2})^2
\end{align}
with some positive weight function $w_{\ell,T} \in L^\infty(T)$.
A point $x \in \mathcal{N}_\ell$ is in the contact set $\mathcal{C}_{v_h}$ of $v_h$ if (and only if)
\begin{align}\label{ineq:contact-set-condition}
	0 \leq v_h(z) - v_h(x) - \nabla v_h(x) \cdot (z - x) \quad\text{for all } z \in \Omega
\end{align}
(because $\partial \Gamma_{v_h} (x) = \{\nabla v_h(x)\}$ for any $x \in \Omega \cap \mathcal{C}_{v_h}$ from the proof of \Cref{thm:GUB-MA}).
While this condition can be checked explicitly, it leads to a global problem for each Gauss point, which may become rather expensive.
Instead, \eqref{ineq:contact-set-condition} is verified at only a finite number of points, e.g., $z \in \mathcal{V}_\ell \coloneqq \mathcal{N}_\ell \cup \mathcal{N}_\ell^b$, where $\mathcal{N}_\ell^b \subset \partial \Omega$ is a discrete subset of $\partial \Omega$.
The set of points $\mathcal{V}_\ell$ create a quasi-uniform refinement $\mathcal{T}_\ell$ of the partition $\mathcal{T}$ into triangles and we assume that the mesh-size of $\mathcal{T}_\ell$ tends to zero as $\ell \to \infty$.
Let $\mathrm{I}_\ell v_h$ denote the nodal interpolation of $v_h$ w.r.t.~the mesh $\mathcal{T}_\ell$.
We replace the function $\chi_{\mathcal{C}_{v_h}}$ in \eqref{eq:numerical-integration-RHS} by
the indicator function $\chi_{\mathcal{C}^\ell_{v_h}}$ of the set
\begin{align*}
	\mathcal{C}^\ell_{v_h} \coloneqq \mathcal{C}_{\mathrm{I}_\ell v_h} \cap \{x \in \Omega \setminus \cup \mathcal{E}: \D^2_\mathrm{pw} v_h(x) \geq 0 \text{ is positive semi-definite}\}.
\end{align*}
In practice, the numerical integration formula for $\|f - f_h\|_{L^2(\omega)}$ reads
\begin{align}\label{eq:numerical-integration-appr-RHS}
	\sum_{T \in \mathcal{T}} \sum_{x \in \mathcal{N}_\ell \cap T \cap \omega} \operatorname{meas}(T) w_{\ell,T}(x)(f(x) - 2\chi_{\mathcal{C}^\ell_{v_h}}(x) (\det \D^2_\mathrm{pw} v_h(x))^{1/2})^2.
\end{align}
The convex envelope $\Gamma_{\mathrm{I}_\ell v_h}$ of $\mathrm{I}_\ell v_h$ can be computed, for instance, by the quickhull algorithm \cite{BarberDobkinHuhdanpaa1996}.
Therefore, it is straight-forward to compute \eqref{eq:numerical-integration-appr-RHS}.
We note that if $x \in \mathcal{C}_{v_h} \cap \mathcal{N}_\ell$, then \eqref{ineq:contact-set-condition} holds for any $z \in \mathcal{V}_\ell$.
Since the convex envelope of the continuous piecewise affine function $\mathrm{I}_\ell v_h$ only depends on the nodal values of $v_h$, this implies $x \in \mathcal{C}^\ell_{v_h} \cap \mathcal{N}_\ell$.
However, the reverse is not true. 
Hence, \eqref{eq:numerical-integration-appr-RHS} and \eqref{eq:numerical-integration-RHS} may not coincide.
From the uniform convergence of $\mathrm{I}_\ell v_h$ to $v_h$ as $\ell \to \infty$, we deduce
\begin{align*}
	\limsup_{\ell \to \infty} \mathcal{C}^\ell_{v_h} \coloneqq \cap_{\ell \in \mathbb{N}} \cup_{k \geq \ell} \mathcal{C}^\ell_{v_h} \subset \mathcal{C}_{v_h},
\end{align*}
cf.~\cite[Lemma A.1]{CaffarelliCrandallKocanSwiech1996}.
Given any $\delta > 0$, this implies $\mathcal{C}^\ell_{v_h} \setminus \mathcal{C}_{v_h} \subset \{x \in \Omega\setminus \mathcal{C}_{v_h} : \mathrm{dist}(x,\mathcal{C}_{v_h}) \leq \delta\}$ for sufficiently large $\ell$.
Therefore, the set of all points $x \in \mathcal{N}_\ell$ with $\chi_{\mathcal{C}_{v_h}} \neq \chi_{\mathcal{C}^\ell_{v_h}}(x)$ is a subset of $\mathcal{C}^\ell_{v_h} \setminus \mathcal{C}_{v_h}$, whose Lebesgue measure vanishes in the limit as $\ell \to \infty$.
In conclusion, the limits of \eqref{eq:numerical-integration-RHS} and \eqref{eq:numerical-integration-appr-RHS} coincide.\medskip

\emph{Computation of $\mu \coloneqq \limsup_{x \to \partial \Omega} |(g - \Gamma_{v_h})(x)|$.}
The boundary residual $\mu$ is approximated by $\|g - \Gamma_{\mathrm{I}_\ell v_h}\|_{L^\infty(\partial \Omega)}$.
Since $\Gamma_{v_h} \leq \mathrm{I}_\ell v_h$ and $\mathrm{I}_\ell v_h$ is piecewise affine, $\Gamma_{v_h} \leq \Gamma_{\mathrm{I}_\ell v_h}$ holds in $\Omega$.
On the other hand, we have $\lim_{\ell \to \infty} \|v_h - \mathrm{I}_\ell v_h\|_{L^\infty(\Omega)} = 0$.
Hence, any supporting hyperplane $a_{x}$ of $\Gamma_{\mathrm{I}_\ell v_h}$ at $x \in \Omega$ satisfies $a_x - \delta_\ell \leq v_h$ in $\Omega$ with $\delta_\ell \coloneqq \|v_h - \mathrm{I}_\ell v_h\|_{L^\infty(\Omega)}$.
Since $a_x - \delta_\ell$ is an affine function, $\Gamma_{\mathrm{I}_\ell v_h}(x) - \delta_\ell = a_x(x) - \delta_\ell \leq \Gamma_{v_h}(x)$.
We conclude $\Gamma_{\mathrm{I}_\ell v_h} - \delta_\ell \leq \Gamma_{v_h} \leq \Gamma_{\mathrm{I}_\ell v_h}$ in $\Omega$.
In particular, $\lim_{\ell \to \infty} \|g - \Gamma_{\mathrm{I}_\ell v_h}\|_{L^\infty(\partial \Omega)} = \mu$.\medskip

\emph{Choice of $\Omega'$.} Let $\delta \coloneqq \min_{E \in \mathcal{E}} h_E$ denote the minimal edge length of the mesh $\mathcal{T}$.
For all integers $0 \leq j < 1/(2\delta)$, define $\Omega_{j\delta} \coloneqq \{x \in \Omega: \mathrm{dist}(x,\partial \Omega) \geq j\delta\}$.
It seems canonical to choose $\Omega' \coloneqq \Omega_{j\delta}$, where $j$ is the index that minimizes $\mathrm{RHS}_0$.
However, this choice may lead to significant computational effort.
From the interior regularity of Alexandrov solutions \cite{Caffarelli1990},
we can expect that the error is concentrated on the boundary and so, the best $j$ will be close to one.
Accordingly, the smallest $j \geq 0$ is chosen so that $\mathrm{RHS}_0$ with $\Omega' \coloneqq \Omega_{(j+1)\delta}$ is larger than $\mathrm{RHS}_0$ with $\Omega' \coloneqq \Omega_{j\delta}$.

\subsubsection{Adaptive marking strategy}
We define the refinement indicator
\begin{align*}
	\eta(T) \coloneqq j\delta\sqrt{2}\|f-f_h\|_{L^2(T)}^2 + (1-2j\delta)^2\|f-f_h\|_{L^2(T \cap \Omega_{j\delta})}^2
\end{align*}
for any $T \in \mathcal{T}$, where the scaling in $\delta$ arises from \eqref{ineq:GUB-MA} with $n = 2$. Let $\sigma \coloneqq \mathrm{RHS}_0 - \mu$ denote the remaining contributions of $\mathrm{RHS}_0$, where $\mu = \limsup_{x \to \partial \Omega} |(g - \Gamma_{u_{h,\varepsilon}})(x)|$ from above.
If $\sigma/10 < \|g - u_{h,\varepsilon}\|_{L^\infty(\partial \Omega)}$, then we mark one fifth of all boundary edges $E \in \mathcal{E}$ with the largest contributions $\|g - u_{h,\varepsilon}\|_{L^\infty(E)}$.
Otherwise, we mark a set $\mathcal{M}$ of rectangles with minimal cardinality so that
\begin{align*}
	\frac{1}{2}\sum_{T \in \mathcal{T}} \eta(T) \leq \sum_{T \in \mathcal{M}} \eta(T).
\end{align*}

\subsubsection{Displayed quantities}
The convergence history plots display the errors $\|u - u_{h,\varepsilon}\|_{L^\infty(\Omega)}$, $\mathrm{LHS} \coloneqq \|u - \Gamma_{u_{h,\varepsilon}}\|_{L^\infty(\Omega)}$ as well as the error estimator $\mathrm{RHS}_0$ against the number of degrees of freedom $\mathrm{ndof}$ in a log-log plot. (We note that $\mathrm{ndof}$ scales 
like $h_\mathrm{\max}^{-2}$ on uniformly refined meshes.)
Whenever the solution $u$ is sufficiently smooth, the errors $\|u - u_{h,\varepsilon}\|_{H^1(\Omega)}$ and $\|u - u_{h,\varepsilon}\|_{H^2(\Omega)}$ are also displayed.
Solid lines in the convergence history plots indicate adaptive mesh-refinements, while dashed lines are associated with uniform mesh-refinements.
The experiments are carried out for the regularization parameters $\varepsilon = 10^{-3}$ in the first two experiments and $\varepsilon = 10^{-4}$ for the third experiment.
For a numerical comparison of various $\varepsilon$, we refer to \cite{gt2021}.

\subsection{Regular solution}
In this example from \cite{DeanGlowinski2006}, the exact solution $u$ is given by
$$
u(x) = \frac{(2|x|)^{3/2}}{3}
$$
with $f(x) = 1/|x|$.
The solution belongs to $H^{5/2-\nu}(\Omega)$
for any $\nu>0$, but not to $C^2(\overline{\Omega})$.
It is proven in \cite{gt2021} that $u$ is the viscosity solution to $F_\varepsilon(f;x,\D^2 u) = 0$ in $\Omega$ for any regularization parameter $0 < \varepsilon \leq 1/3$.
Accordingly, we observed no visual differences in the convergence history plots for different $0 < \varepsilon \leq 1/3$.
\Cref{f:exp1-conv} displays the convergence rates $0.8$ for $\|u - u_{h,\varepsilon}\|_{L^\infty(\Omega)}$ and $\mathrm{RHS}$, $3/4$ for $\|u - u_{h,\varepsilon}\|_{H^1(\Omega)}$, and $1/4$ for $\|u-u_{h,\varepsilon}\|_{H^2(\Omega)}$ on uniform meshes.
The adaptive algorithm refines towards the singularity of $u$ at $0$ and leads to improved convergence rates for all displayed quantities.
We observe the rate $1.75$ for $\|u - u_{h,\varepsilon}\|_{L^\infty(\Omega)}$, $1$ for $\mathrm{LHS}$, $\mathrm{RHS}_0$, and $\|u - u_{h,\varepsilon}\|_{H^2(\Omega)}$, and $1.5$ for $\|u-u_{h,\varepsilon}\|_{H^1(\Omega)}$.
It is also worth noting that $\mathrm{RHS}_0$ seems to be efficient on adaptive meshes.

% [inline block 0: 8 envs, 29312 chars -> data_tex | \begin{filecontents}{convhist_ex1_adap_epsi0.1.dat} 	ndof	hinv	Linferr		LHS		L2error		H1error		H2error		eta		eta2	niter...]

	\end{subfigure}
	\caption{Convergence history for the first experiment with $\varepsilon = 10^{-3}$.
		\label{f:exp1-conv}
	}
\end{figure}

\subsection{Convex envelope of boundary data}
In the second example, we approximate the exact solution
\begin{align*}
	u(x,y) \coloneqq |x - 1/2|
\end{align*}
to $\det \D^2 u = 0$ in $\Omega$, which is the largest convex function with prescribed boundary data.
The solution belongs to $H^{3/2-\delta}(\Omega)$ for any $\delta$ > 0, but not to $H^{3/2}(\Omega)$.
It was observed in \cite{gt2021} that the regularization error of $u-u_\varepsilon$ dominates the discretization error $u - u_{h,\varepsilon}$ on finer meshes.
Therefore, the errors $\|u - u_{h,\varepsilon}\|_{L^\infty(\Omega)}$ and $\|u - u_{h,\varepsilon}\|_{H^1(\Omega)}$ stagnate at a certain value (depending on $\varepsilon$) as displayed in \Cref{f:exp2-conv}.
However, $\mathrm{LHS}$ converges with convergence rate $1/2$ on uniform meshes even for fixed $\varepsilon$.
At first glance on the discrete solution shown in \Cref{f:exp2-discrete-solution},
we can expect that the maximum of $|u - u_{h,\varepsilon}|$ is attained along the line $\mathrm{conv}\{(1/2,0),(1/2,1)\}$.
This error depends on the regularization parameter and only vanishes in the limit as $\varepsilon \to 0$, but the convex envelope of $u_{h,\varepsilon}$ provides an accurate approximation of $u$ along this line.
In fact, \Cref{f:exp2-mesh} shows that the adaptive algorithm refines towards the points $(1/2,0)$ and $(1/2,1)$, but the whole line $\mathrm{conv}\{(1/2,0),(1/2,1)\}$ is only of minor interest.
We observe the improved convergence rate $2.5$ for $\mathrm{LHS}$ on adaptive meshes.
The guaranteed upper bound $\mathrm{RHS}_0$ can provide an accurate estimate of $\mathrm{LHS}$, but seems to oscillate due to the nature of the problem.
The goal of the adaptive algorithm is the reduction of $\mathrm{RHS}_0$, which consists of the error $\|f - f_h\|_{L^2(\Omega)}$ in the Monge--Amp\`ere measures and of some boundary data approximation error.
Thanks to the additional regularization provided by the convex envelope, $\|f - f_h\|_{L^2(\Omega)}$ is concentrated at the points $(1/2,0)$ and $(1/2,1)$, but becomes very small after some mesh-refining steps.
We even observed in \Cref{f:exp2-conv} that $\mathrm{LHS} = \mathrm{RHS}_0$ on two meshes, i.e., $\|f - f_h\|_{L^2(\Omega)} = 0$.
Then $\mathrm{RHS}_0$ is dominated by the data boundary approximation error and leads to mesh refinements on the boundary.
This may result in significant changes in the Monge--Amp\`ere measure of $\Gamma_{u_{h,\varepsilon}}$, because the convex envelope of the discrete function $u_{h,\varepsilon}$ depends heavily on its values on the boundary in this class of problems.

% [inline block 1: 7 envs, 65857 chars -> data_tex | \begin{filecontents}{convhist_ex2_adap_epsi0.1.dat} 	ndof	hinv	Linferr	LHS	L2error	H1error	H2error	eta	eta2	niter...]

	\caption{Convergence history for the second experiment with $\varepsilon = 10^{-4}$.}
	\label{f:exp2-conv}
\end{figure}
\begin{figure}
	\centering
	\includegraphics[scale=0.18]{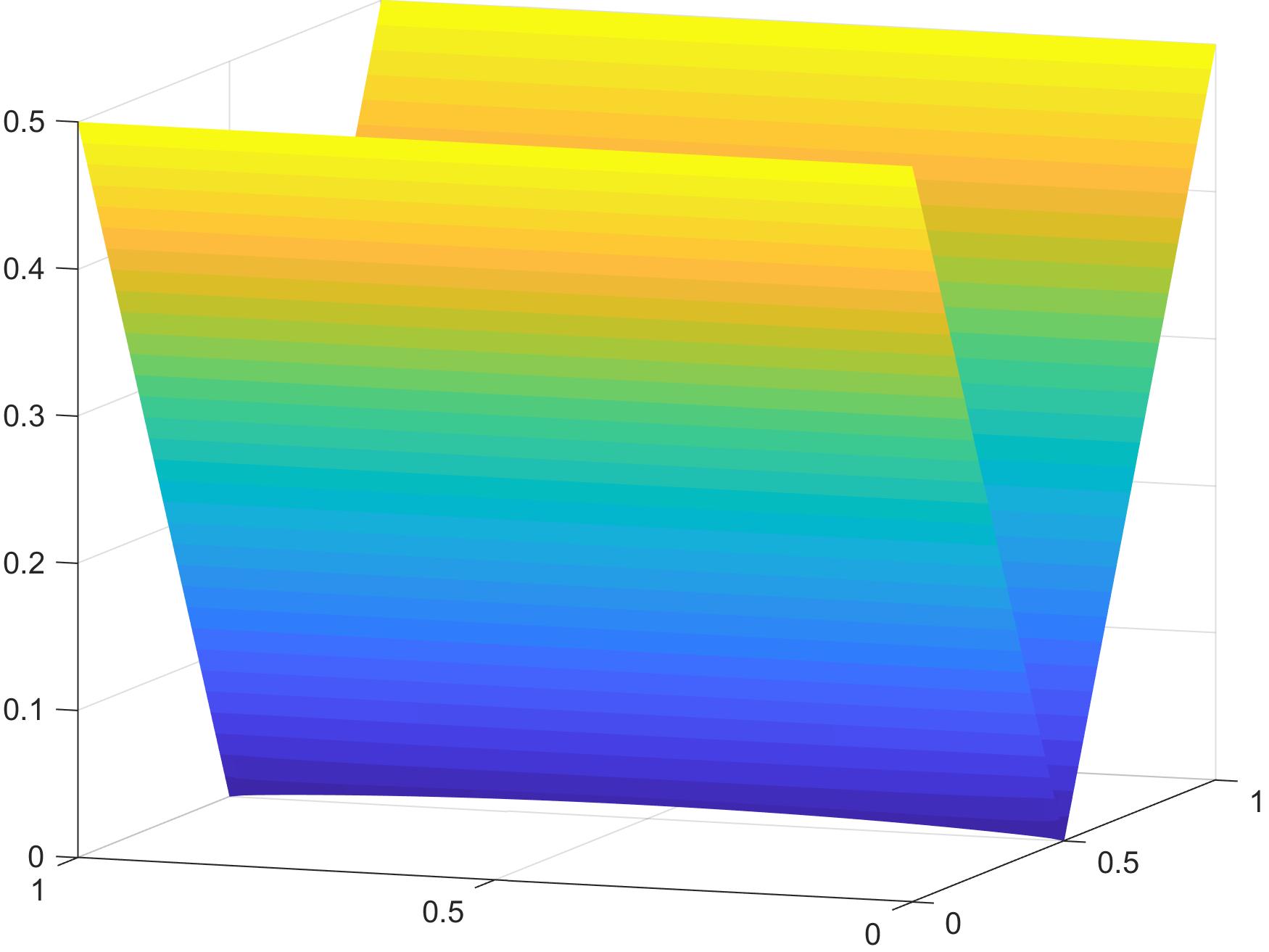}
	\caption{Discrete solution on a uniform mesh with 4225 nodes.}
	\label{f:exp2-discrete-solution}
\end{figure}
\begin{figure}
	\centering
	\includegraphics[scale=0.25]{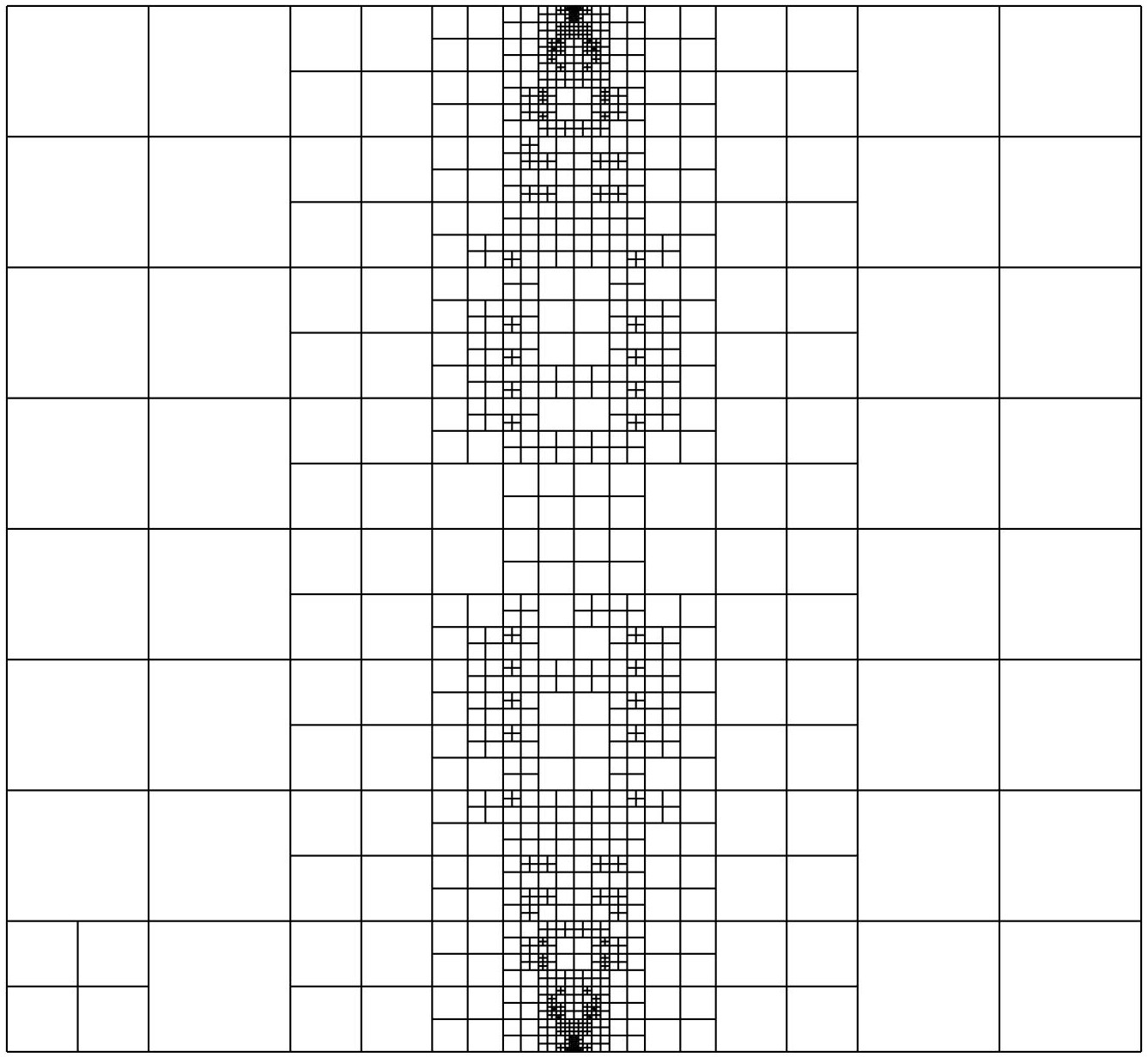}
	\caption{Adaptive mesh with 1907 nodes for the second experiment.}
	\label{f:exp2-mesh}
\end{figure}

\subsection{Nonsmooth exact solution}
In this example, the function
\begin{align*}
	u(x,y) \coloneqq -\big(\sin(\pi x)^{-1} + \sin(\pi y)^{-1}\big)^{-1}
\end{align*}
is the solution to the Monge--Amp\`ere equation \eqref{def:Monge-Ampere} with homogenous boundary data and right-hand side
\begin{align*}
	f(x,y) = \frac{4\pi^2\sin(\pi x)^2\sin(\pi y)^2(2-\sin(\pi x)\sin(\pi y))}{(\sin(\pi x) + \sin(\pi y))^4}.
\end{align*}
The function $u$ belongs to $C^2(\Omega) \cap H^{2-\delta}(\Omega)$ for all $\delta > 0$, but neither to $H^{2}(\Omega)$ nor $C^2(\overline{\Omega})$.
The convergence history is displayed in \Cref{f:exp3-conv}.
Notice from \Cref{prop:continuity-convex-envelope} that $\mathrm{RHS}_0$ consists solely of the error in the Monge--Amp\`ere measures.
In this example, $f$ exhibits strong oscillations at the four corners of the domain $\Omega$ and the adaptive algorithm seems to solely refine towards these corners as displayed in \Cref{f:exp3-mesh}.
While $\mathrm{RHS}_0$ converges on uniform meshes (although with a slow rate), there is only a marginal reduction of $\mathrm{RHS}_0$ for adaptive computation.
We can conclude that the discrete approximation cannot resolve the infinitesimal oscillation of the Monge--Amp\`ere measure of $u$ properly.
This results in the stagnation of $\|u - u_{h,\varepsilon}\|_{L^\infty(\Omega)}$ and $\mathrm{LHS}$ at an early level in comparison to uniform mesh refinements.
However, we also observed that the stagnation point depends on the maximal mesh-size.
In fact, if we start from an initial uniform mesh with a small mesh-size $h_0$, significant improvements of $\mathrm{RHS}_0$ are obtained on the first levels of adaptive mesh refinements as displayed in \Cref{f:exp3-conv-inital-mesh}.
Undisplayed experiments show the same behaviour for $\|u - u_{h,\varepsilon}\|_{L^\infty(\Omega)}$.
This leads us to believe that, in this example, a combination of uniform and adaptive mesh-refining strategy provides the best results.

% [inline block 2: 7 envs, 30706 chars -> data_tex | \begin{filecontents}{convhist_ex3_unif_epsi0.0001.dat} 	ndof	hinv	Linferr		LHS		L2error		H1error		H2error		eta		eta2	nit...]

	\caption{Convergence history of $\mathrm{LHS}$ for the third experiment with $\varepsilon = 10^{-4}$ and different initial meshes.}
	\label{f:exp3-conv-inital-mesh}
\end{figure}

\bibliography{references}
\bibliographystyle{amsplain}

\end{document}